\documentclass[11pt]{amsart}

\usepackage{amsmath, a4wide}
\usepackage{amssymb,amsthm,amsfonts}
\usepackage{amsrefs}
\usepackage[utf8]{inputenc}
\usepackage[T1]{fontenc}
\usepackage{xcolor}

\renewcommand{\mathcal}{\mathscr}

\def\R {\mathbb{R}}

\def\Z {\mathbb{Z}}

\renewcommand{\epsilon}{\varepsilon}
\newcommand{\eps}{\varepsilon}
\newcommand{\e}{\varepsilon}

\renewcommand{\leq}{\leqslant}
\renewcommand{\le}{\leqslant}
\renewcommand{\geq}{\geqslant}
\renewcommand{\ge}{\geqslant}

\newtheorem{proposition}{Proposition}[section]
\newtheorem{theorem}[proposition]{Theorem}
\newtheorem{corollary}[proposition]{Corollary}
\newtheorem{lemma}[proposition]{Lemma}
\theoremstyle{definition}
\newtheorem{definition}[proposition]{Definition}

\newtheorem{remark}[proposition]{Remark}
\numberwithin{equation}{section}

\title{Fractional mean curvature flow of Lipschitz graphs} 

\author[A. Cesaroni, M. Novaga]{}


\keywords{Fractional mean curvature flow, self-similar solutions, long-time behavior.}

 \email{annalisa.cesaroni@unipd.it}
 \email{matteo.novaga@unipi.it}

\thanks{The authors were supported by the INDAM-GNAMPA and by the PRIN Project 2019/24 {\it Variational methods for stationary and evolution problems with singularities and interfaces}.}

\begin{document}
\maketitle

\centerline{\scshape Annalisa Cesaroni }
\medskip
{\footnotesize
 \centerline{Department of Statistical Sciences}
   \centerline{University of Padova}
   \centerline{Via Cesare Battisti 141, 35121 Padova, Italy  }
} 
\medskip

\centerline{\scshape Matteo Novaga}
{\footnotesize
\centerline{ Department of Mathematics}
   \centerline{University of Pisa}
   \centerline{Largo Bruno Pontecorvo 5, 56127 Pisa, Italy  }
}
%
%
\begin{abstract}
We consider the fractional mean curvature flow of entire Lipschitz graphs. 
We provide regularity results, and we study the long time asymptotics of the flow. In particular we show that in a suitable  rescaled framework, if the initial graph is a sublinear perturbation of a cone,   the evolution asymptotically approaches   an expanding self-similar solution. We also prove stability of hyperplanes and of convex cones in the unrescaled setting. 
\end{abstract} 

\tableofcontents

\section{Introduction}
Given a set $E\subseteq \R^{n+1}$, we define the fractional mean curvature flow $E_t$ starting from  $E$ as the flow obtained by the following geometric evolution law: 
the velocity at a point~$p\in \partial E_t$ is given by
\begin{equation}\label{kflow} 
\partial_t p\cdot \nu(p)=-H_s(p,E_t):=-\lim_{\e\to 0}
\int_{\R^{n+1}\setminus B_\eps(p)}\Big(
\chi_{\R^{n+1}\setminus E_t}(y)-\chi_{E_t}(y)\Big)\ \frac{1}{|p-y|^{n+1+s}}\,dy,\end{equation} 
where $s\in (0,1)$ is a fixed parameter and $\nu(p)$ is the outer normal to $\partial E_t$ at $p$.  

The fractional mean curvature flow  can be interpreted as the fractional analogue of the classical mean curvature flow. Indeed, as the mean curvature flow is the $L^2$ gradient flow of the perimeter, the fractional mean curvature flow is the $L^2$ gradient  flow of   the so-called fractional perimeter, see \cite{MR2675483}, which can be seen as an interpolation  norm (the Gagliardo fractional seminorm) of the characteristic function of a measurable set, which interpolates between the BV norm, which is the standard perimeter, and the $L^1$ norm, which is the volume. 

Therefore, the fractional mean curvature flow presents  some analogies with the classical mean curvature flow. Recently a local existence result for smooth solutions starting from compact $C^{1,1}$ initial sets was provided in  \cite{lamanna} (see also \cites{cs, cnr} for an analog of the BMO scheme),
moreover existence and uniqueness of the level set flow for general nonlocal evolution equations,  including \eqref{kflow}, has been developed in   \cite{cmp} (see also \cite{i}), using the maximum principle and the monotonicity of the curvature with respect to inclusions. 
On the other hand, the  fractional flow  presents some different features with respect to the classical mean curvature flow, since  nonlocal effects  
come into play. For instance, as a basic example one can consider a  planar strip, which is stationary for the curvature flow, and shrinking for the fractional flow.  

An important issue in the study of the fractional flow, as for the classical one, is the investigation of the long time   behavior of solutions and  the analysis of the formation of singularities.  In the local case, one of the most important tool in this analysis is  the monotonicity formula established by Huisken in \cite{hu}. The analog of such formula in the fractional setting is still an open problem.  As a consequence, it is still missing a systematic approach for the study of long time asymptotics of the flow and the classification of the possible singularities which may appear.  Nevertheless some results have been recently obtained, we recall for instance  the analysis of the formation of neckpinch singularities in \cite{csv1}, and of the fattening phenomenon for the  evolution of curves  with cross-type and cusp-type singularities in \cite{cdnv}.  Moreover, in \cite{csv2}   it has been proved that smooth convex sets  evolving under the  volume preserving fractional  mean curvature flow approach round spheres. 

Here we analyze the evolution \eqref{kflow} under the additional assumption that the boundary of the initial  datum $E_0$ can be written as a Lipschitz graph, that is, there exists  $e\in \R^{n+1}$, such that $\nu(p)\cdot e>0$  for every $p\in \partial E_0$. By monotonicity of the flow it is possible  to show that the evolution $E_t$ maintains  this property for all positive times
$t>0$, that is,   $\nu(p)\cdot e>0$  for every $p\in \partial E_t$, see Section \ref{sectionlevel}. Up to a rotation of coordinates, we will assume that $e=e_{n+1}$.

For the local case, the analysis of the mean curvature flow of entire Lipschitz graphs   goes back to the work by Ecker and Huisken \cites{eh,e}, whereas the anisotropic mean curvature flow of entire Lipschitz graphs has been recently considered by the authors and a coauthor in \cite{ckn}.  In particular, in \cite{eh}  it is proved that   the evolution admits  a smooth 
 solution for all times, which approaches a self-similar solution as $t\to +\infty$, provided that the
initial graph is  "straight" at infinity, in the sense that is a sublinear perturbation of a cone, see assumption \eqref{straight} below. 
In this paper we provide analogous results in the fractional setting. In particular, in Section \ref{sectionreg} we prove the
 $C^{1,\alpha}$ regularity of the flow starting from a Lipschitz graph, which can be strengthened to $C^\infty$  if the initial graph enjoys more regularity. These results are based on the fact that the fractional curvature is an elliptic operator, and so we may apply the regularity results for nonlinear fractional parabolic problems obtained in \cite{serra, silv} and the parabolic bootstrap argument developed in \cite{lamanna}.   
In the case of initial graphs which are merely Lipschitz continuous,  we do not recover the $C^\infty$ regularity obtained in the local case, except in the case of
 self-similar solutions (see Theorem \ref{omogenee}), and this is due to the fact that a  parabolic bootstrap regularity  argument  is  missing  in this setting  for quasilinear fractional operators.  
  
Finally, in Section \ref{sectionconv} we provide the convergence of the rescaled solution to a self-similar expanding solution, under the assumption that the initial graph is straight at infinity in the sense of Ecker and Huisken.  We recall here what we mean for self-similar expanding or contracting solution to \ref{kflow}. 

\begin{definition} \label{defhom} 
An  expanding homothetic solution  is a solution    to \eqref{kflow} such that   $E_t= \lambda(t)E_1$ where $\lambda(1)=1$ and  $\lambda'(t)\geq 0$ for $t>1$. 
This is equivalent to assume that  $E_1$ is a solution to \begin{equation}\label{exp} c(  p\cdot \nu)=-  H_s(p  , E_1)\end{equation}  for some $c\geq 0$. Observe that  necessarily  $\lambda(t)=\left[ c(s+1) (t-1)+1\right]^{\frac{1}{s+1}}$. 

A shrinking homothetic solution   to \eqref{kflow}  is a solution to \eqref{kflow} such that   $E_t=  \lambda(t)E_1$ where $\lambda(1)=1$ and  $\lambda'(t)\leq 0$ for $t>1$. 
This is equivalent to assume that  $E_1$ is a solution to \begin{equation}\label{contr} c(  p\cdot \nu)= H_s(p  , E_1)\end{equation}  for some $c\geq 0$. 
  \end{definition} 
  
In Section \ref{sectionhom} we study the main properties of the expanding self-similar solutions to \eqref{kflow}, whereas in Section \ref{sectionsh} we show that the only graphical shrinking self-similar solutions to \eqref{kflow} are actually stationary solutions. In the local setting, this result has been obtained  for entire graphs without growth condition at infinity in \cite{w}.  In the fractional setting we obtain the result for entire Lipschitz graphs as a byproduct of a Liouville theorem for ancient solutions of parabolic nonlinear equations obtained in \cite{serra}.   We also recall that a preliminary analysis of existence and stability of fractional symmetric shrinkers has been developed  in \cite{cn}. 

\smallskip

Finally  Section \ref{unrescaled} contains convergence  results  in the unrescaled setting. In particular we provide the stability of hyperplanes, when we start the evolution from graphs which are asymptotically flat. In the local setting this stability can be proved by using comparison with large balls and area decay estimates, see \cites{clutt, e, n}, whereas in the fractional setting  we use comparison with large balls and  an argument based on construction of appropriate periodic barriers. 
We show also   stability of  convex cones and, in some particular cases, of mean convex cones,  in the unrescaled setting. Analogous results in the  local   setting were  obtained  in \cite{clutt} for the isotropic case and in \cite{ckn} for the anisotropic case.   

\section{Level set formulation} \label{sectionlevel} 
The level set flow associated to~\eqref{kflow} can be defined as follows. 
Given an initial set $E_0\subset \R^{n+1}$ we choose a bounded Lipschitz continuous function 
$U_{0}:\R^{n+1}\to \R$ such that 
\begin{eqnarray*}&&
\partial E_0=\{p\in\R^{n+1} \text{ s.t. } U_{0}(p)=0\}=\partial\{p\in\R^{n+1}\text{ s.t. }  U_0(p)\geq 0\}\\
{\mbox{and }} && E_0=\{p\in\R^{n+1}\text{ s.t. } U_0(p)\geq 0\}. 
\end{eqnarray*}
Let also~$U(p,t)$ be the viscosity solution of the following nonlocal parabolic problem
\begin{equation}\label{level1}
\begin{cases}
U_t(p,t)+|DU(p,t)| H_s((p,t),  \{ (p',t)\ | U(p',t)\geq U(p,t)\})=0\\
U(p,0)= U_0(p).
\end{cases} 
\end{equation} 
Then the level set flow of $\partial E_0$ is given by 
\begin{equation}\label{sigmaet} \Sigma_E(t):=\{p\in\R^{n+1}\text{ s.t. }  U(p,t)=0\}. \end{equation} 
We associate to this level set the outer and inner flows defined as follows:
\begin{equation}
\label{outin} E^+(t):= \{p\in\R^{n+1}\text{ s.t. }  U(p,t)\geq 0\} \quad
{\mbox{and}} \quad E^-(t):= \{p\in\R^{n+1}\text{ s.t. }  U(p,t)>0\}.
\end{equation} 
We observe that the equation in \eqref{level1} is geometric, so  
if we replace the initial condition with any function $V_0$ with the same level sets $\{U_0 \geq 0\}$ and $\{ U_0 > 0 \}$, the evolutions
$E^+(t)$ and $E^-(t)$ remain the same.
Existence and uniqueness of viscosity solutions to the level set formulation of \eqref{kflow} has been provided in \cites{i,cmp}, and qualitative properties of  smooth solutions  have been studied in \cite{SAEZ}.  

In this paper we consider the particular case in which the initial set  $E_0$ can be written- up to suitable rotation of coordinates- as the subgraph of a function $u_0:\R^n\to \R$. 
So, it is possible to define the evolution  $E_t$ at time $t$  as the subgraph of the solution $u(\cdot, t)$ to the following 
nonlocal quasilinear system
\begin{equation}\label{levelset}\begin{cases}  u_t+\sqrt{1+|Du |^2}H_s((x,u(x,t)),  \{ (x',z)\ | z\leq u(x',t)\}) =0\\ 
u(x,0)=u_0(x).\end{cases}\end{equation}
 
\begin{theorem}\label{ex} 
Let $u_0:\R^n\to \R$ be a uniformly continuous function. \\ 
Let $v, w\in C(\R^n\times [0, +\infty))$ be respectively a viscosity subsolution and a viscosity supersolution to \eqref{levelset} such that $v(x,0)\leq u_0(x)\leq w(x,0)$.  
\\ Then $v(x,t)\leq w(x,t)$ for all $(x,t)\in \R^n\times (0, +\infty)$. 

In particular \eqref{levelset} 
admits a unique viscosity solution $u(x,t)\in C(\R^n\times [0, +\infty))$ with $u(x,0)=u_0(x)$. 
Moreover, if $u_0$ is Lipschitz continuous with Lipschitz constant $\|Du_0\|_\infty$, then $|u(x,t)-u(x',t)|\leq \|Du_0\|_\infty|x-x'|$ for all $x,x'\in \R^n$ and $t>0$. 

\end{theorem}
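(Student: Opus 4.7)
The plan is to reduce to the already-developed level set formulation \eqref{level1}, for which comparison, existence, and uniqueness were established in \cite{cmp, i}, and then read the conclusions off in the graphical framework.

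\textbf{From graphs to level sets.} To any $\phi:\R^n\times[0,+\infty)\to\R$ associate
\[
\Phi(x,z,t):=\phi(x,t)-z,
\]
so that the subgraph of $\phi(\cdot,t)$ is the superlevel set $\{\Phi(\cdot,t)\geq 0\}$. Since $H_s$ is invariant under vertical translations of the set, for every $\lambda\in\R$ and $z=\phi(x,t)-\lambda$ one has
\[
H_s\bigl((x,z),\{\Phi(\cdot,t)\geq\lambda\}\bigr)=H_s\bigl((x,\phi(x,t)),\{(x',z'):z'\leq\phi(x',t)\}\bigr).
\]
Combined with $|D_{(x,z)}\Phi|=\sqrt{1+|D\phi|^2}$, this identity identifies \eqref{levelset} with \eqref{level1} at the classical level. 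The equivalence carries over to viscosity sub- and supersolutions: a test function $\psi(x,t)$ touching $\phi$ from above at $(x_0,t_0)$ lifts to $\tilde\psi(x,z,t):=\psi(x,t)-z$ touching $\Phi$ at $(x_0,\phi(x_0,t_0),t_0)$, while any test function $\tilde\psi$ for $\Phi$ can be reduced to one for $\phi$ by implicitly solving $\partial_z\tilde\psi(x,\zeta(x,t),t)=-1$ and setting $\psi(x,t):=\tilde\psi(x,\zeta(x,t),t)+\zeta(x,t)$, using $\partial_z\tilde\psi(x_0,z_0,t_0)=\partial_z\Phi(x_0,z_0,t_0)=-1$.

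\textbf{Comparison.} Apply the above to $v$ and $w$ and set $V(x,z,t):=v(x,t)-z$, $W(x,z,t):=w(x,t)-z$: these are, respectively, a viscosity subsolution and a viscosity supersolution of \eqref{level1} with $V(\cdot,0)\leq W(\cdot,0)$. Composing with a bounded smooth increasing $\Theta:\R\to\R$ such as $\arctan$ -- legitimate because \eqref{level1} is geometric -- brings us into the framework of \cite{cmp}, and the comparison principle proved there (see also \cite{i}) yields $\Theta\circ V\leq\Theta\circ W$, hence $V\leq W$; evaluating at $z=0$ gives $v\leq w$ on $\R^n\times(0,+\infty)$.

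\textbf{Existence, uniqueness and the Lipschitz bound.} Uniqueness is immediate from comparison; existence can be obtained either via Perron's method (half-spaces being stationary for the fractional flow provides affine sub- and supersolutions, combined with the uniform continuity of $u_0$) or directly by reading $u$ off the zero superlevel set of the (unique) solution to \eqref{level1} with initial datum $\Theta(u_0(x)-z)$. For the Lipschitz bound, \eqref{levelset} is invariant under horizontal translations in $x$ and under vertical shifts of $u$ by constants; thus, for every $h\in\R^n$, both $u(\cdot+h,\cdot)\pm\|Du_0\|_\infty|h|$ are viscosity solutions with initial data sandwiching $u_0$, so comparison at time $t$ gives $|u(x+h,t)-u(x,t)|\leq\|Du_0\|_\infty|h|$. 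The main delicate step is the rigorous viscosity equivalence for the nonlocal operator under the lifting $\phi\mapsto\Phi$ and the legitimacy of the $\Theta$-truncation; past that, everything reduces to a direct application of \cite{cmp, i} and elementary invariance arguments.
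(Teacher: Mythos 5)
Your proposal is correct and follows essentially the same route as the paper: lift $u\mapsto u(x,t)-z$ to the level-set formulation \eqref{level1}, invoke the comparison principle of \cite{cmp,i}, and derive existence, uniqueness, and the Lipschitz bound from comparison plus translation invariance (horizontal for the Lipschitz estimate, vertical to see that the solution of \eqref{level1} stays of the form $u(x,t)-z$). You actually supply more detail than the paper at the two points it dismisses as "easy to check" -- the equivalence of viscosity sub/supersolutions under the graph-to-level-set lifting, and the $\Theta$-truncation needed to fit the bounded framework of \cite{cmp} -- both of which are handled correctly.
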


\begin{proof} 
For every  $x\in \R^n$ and $z\in\R$, we define the uniformly continuous function $U_0(x,z):= u_0(x)-z$,    and the functions $V(x,z,t)= v(x,t)-z$, $W(x,z,t)= w(x,t)-z$. Then it is easy to check that $V,W$ are respectively a viscosity sub and supersolution to \eqref{level1} 
  such that $V(x,z,0)\leq U_0(x,z)\leq W(x,z,0)$. 
 Then by the comparison principle proved in \cite{i} (and for general nonlocal geometric equations in \cite{cmp}) we get that $V(x,z,t)\leq W(x,z,t)$ for all $x\in\R^n$, $z\in \R$, $t>0$.  This implies the result. 
 
Moreover, again by the results proved in \cites{i, cmp},    the system \eqref{level1} admits a unique viscosity solution $U(x,z,t)$.  If $U_0$ is Lipschitz continuous, then it is easy to check that also $U(x,z,t)$ is Lipschitz continuous in space, with Lipschitz constant less or equal to the Lipschitz constant of $U_0$, by using the comparison principle and the invariance by translation of the differential operator appearing in \eqref{level1}.  By comparison and again using the fact that the operator is invariant by translation in space, for every $h\in\R$ 
\[ U(x,z,t)+h= U(x, z+h,t).\]  Therefore, we conclude that $U(x,z,t)= z-u(x,t)$, where $u$ is a viscosity solution to \eqref{levelset}. 
\end{proof}

We now define the rescaled time variables as follows: 
\begin{equation}\label{resc} \tau(t):=\frac{\log(t(s+1)+1)}{s+1}\qquad \text{ that is }\qquad t= \frac{e^{(s+1)\tau}-1}{s+1}. \end{equation}   
In the rescaled time variables, the evolution \eqref{kflow} becomes 
\begin{equation}\label{kflowresc} 
\partial_\tau \tilde p\cdot  \tilde\nu= -  \tilde p \cdot  \tilde \nu -  H_s(\tilde p,\tilde E_\tau).\end{equation} 
We define the rescaled variables $e^{\tau} y= x$, with $\tau$ as in \eqref{resc}, and set 
\begin{equation}\label{rescf} \tilde u(y,\tau):=  e^{- \tau} u \left(y e^{\tau},  \frac{e^{(s+1)\tau}-1}{s+1}\right).\end{equation} 
Then $\tilde u$ solves the system
\begin{equation}\label{levelsetresc}\begin{cases}  \tilde u_\tau+\tilde u -  D\tilde u\cdot y+\sqrt{1+|D\tilde u |^2}H_s((y, \tilde u(y, \tau)), \{ (y',z)\ | z\leq \tilde u(y',t)\}) =0\\ 
\tilde u(y,0)=u_0( y).\end{cases}
\end{equation}
 
 Clearly, 
 the same existence, uniqueness and regularity results stated for  \eqref{levelset} in Theorem \ref{ex}  are valid for  \eqref{levelsetresc}.

\subsection{Fractional curvature on graphs} 
 
%
%
%

We recall  an equivalent formulation of the fractional mean curvature  $H_s$ on  graphical hypersurfaces, see \cite{SAEZ, bfv}. First of all observe that if $\Pi=\{(x',z'),\ | \ z'\geq u(x,t)+Du(x,t)\cdot (x'-x)\}$ 
then by symmetry 
\[\lim_{\e\to 0}
\int_{\R^{n+1}\setminus B_\eps(p)}\Big(
\chi_{\R^{n+1}\setminus \Pi}(y)-\chi_{\Pi}(y)\Big)\ \frac{1}{|p-y|^{n+1+s}}\,dy=0.\]
Therefore, for  $p=(x, u(x,t))$  and $E_t:=\{ (x',z)\ |\  z\leq u(x',t)\}$,  we get (intending the integrals in the principal value sense)  \begin{eqnarray} \nonumber 
H_s(p,E_t) &=&  
\int_{\R^{n+1}}\left(\frac{
\chi_{\R^{n+1}\setminus E_t}(y)-\chi_{\Pi}(y)}{|p-y|^{n+1+s}}+\frac{  \chi_{\R^{n+1}\setminus \Pi}(y)-\chi_{E_t}(y)}{|p-y|^{n+1+s}}\right)\, dy  \\\nonumber 
&=& 2\int_{\R^n}   \int_{u(x',t)}^{ u(x,t)+Du(x,t)\cdot (x'-x)} \frac{1}{( |x'-x|^2+|z'-u(x,t)|^2)^{(n+1+s)/2}}\, dzdx'\\
&=& 2\int_{\R^n}\frac{1}{|x-x'|^{n+s}}\int_{\frac{u(x',t)-u(x,t)}{|x-x'|}}^{Du(x,t)\cdot\frac{x-x'}{|x-x'|}}\frac{1}{(1+w^2)^{(n+1+s)/2}}\,dwdx'.  \label{hs}\end{eqnarray} 
We now introduce the function
\[G_s(t):=\int_0^t \frac{1}{(1+w^2)^{(n+1+s)/2}}dw.\]
By \eqref{hs} we get 
  \begin{eqnarray}\nonumber H_s((x,u(x,t)), E_t)&= & 2  \int_{\R^n}\frac{ G_s\left(Du(x,t)\cdot\frac{x-x'}{|x-x'|}\right)-G_s\left(\frac{u(x',t)-u(x,t)}{|x-x'|}\right)}{|x-x'|^{n+s}} dx'\\
&=& 2  \int_{\R^n}\frac{ G_s\left(Du(x,t)\cdot\frac{z}{|z|}\right)-G_s\left(\frac{u(x-z,t)-u(x,t)}{|z|}\right)}{|z|^{n+s}} dz . \label{hsgraph}
\end{eqnarray} 
We observe that this formula holds also in the viscosity sense (that is it is verified at points where the graph of $u$ can be touched with paraboloids). 

Moreover, if we change variable from $z$ to $-z$ in \eqref{hsgraph} and recalling that $G_s$ is odd, we get 
  \begin{eqnarray}\label{hsgraph1}
H_s((x,u(x,t)), E_t)&= &-   \int_{\R^n}\frac{ G_s\left(\frac{u(x+z,t)-u(x,t)}{|z|}\right)+ G_s\left(\frac{u(x-z,t)-u(x,t)}{|z|}\right)}{|z|^{n+s}} dz \\
\nonumber &= &-   \int_{\R^n}\frac{ G_s\left(\frac{u(x+z,t)-u(x,t)}{|z|}\right)- G_s\left(\frac{u(x,t)- u(x-z,t) }{|z|}\right)}{|z|^{n+s}} dz   \\  &=&-   \int_{\R^n}  A(x,z,u)
   \frac{u(x+z,t)+u(x-z,t)-2u(x,t)}{|z|^{n+s+1}}dz.   \nonumber \end{eqnarray} 
where 
\begin{equation}\label{prova} A(x,z,u):=\int_0^1 G_s'\left(w \frac{u(x+z,t) -u(x,t)}{|z|}+  (1-w) \frac{u(x,t)-u(x-z,t)}{|z|}\right) dw. \end{equation} 
Observe that $A(x,z,u)=A(x,-z,u)$ and
\[   \left(1+4\|Du_0\|_\infty^2\right)^{-\frac{n+s+1}{2}}  \leq A(x,z,u)  \leq 1.  \]
This implies that the differential operator $H_s((x,u(x,t)), E_t)$ is elliptic,   see e.g. \cite{serra}.

\section{Regularity results}\label{sectionreg} 
In this section, we provide some regularity results for the flow starting from a Lipschitz graph. 
These results are based on the fact that the fractional curvature for graphs is an elliptic fractional operator, and so it enjoys regularizing effects. 

  \begin{proposition}\label{reg} 
 Let $u_0:\R^n\to \R$ be a Lipschitz continuous function. Then  there exists $\alpha\in (0,s)$ depending on $s$ and $n$ such that  the viscosity solution $u(x,t)$ to \eqref{levelset}  is in $C^{1+\alpha}(\R^n)$ for every $t>0$,  with norm uniformly bounded in   $ \R^n\times [t_0, +\infty)$ by a constant only depending on $\|Du_0\|_\infty$ and $t_0$.  
  In particular,   there exists a constant $C>0$ only depending  on $\|Du_0\|_\infty$ and $t_0$ such that 
  \[\|Du(\cdot, t)\|_{C^{\alpha}(\R^n)}\leq Ct^{-\frac{\alpha}{s+1}} \qquad \text{for }  t\geq t_0. \]
  
 \noindent This implies that, if $\tilde u$ is the rescaled function defined in \eqref{rescf}, for every $\tau_0>0$, 
  there exists a constant $C>0$ only depending on  $\|Du_0\|_\infty$ and $\tau_0$ such that
  \[\|\tilde Du(\cdot, \tau )\|_{C^{\alpha}(\R^n)}\leq C   \left(\frac{1-e^{-\tau_0(s+1)}}{s+1}\right)^{-\frac{\alpha}{s+1}} \qquad \text{for }  \tau\geq \tau_0. \]  
 
  \end{proposition}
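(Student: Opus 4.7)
The plan is to view \eqref{levelset} as a uniformly elliptic nonlocal parabolic equation with ellipticity constants depending only on $\|Du_0\|_\infty$, apply the $C^{1,\alpha}$ regularity theory for such equations from \cite{serra, silv}, and then use the natural parabolic scaling invariance of the equation to convert a fixed-time estimate into the claimed decay rate in $t$.

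The first step is to invoke Theorem \ref{ex} to get the global Lipschitz bound $\|Du(\cdot,t)\|_\infty \leq \|Du_0\|_\infty$ for every $t\geq 0$. Using the representation \eqref{hsgraph1}--\eqref{prova}, the kernel $A(x,z,u)\,|z|^{-n-s-1}$ is then pinched between $(1+4\|Du_0\|_\infty^2)^{-(n+s+1)/2}\,|z|^{-n-s-1}$ and $|z|^{-n-s-1}$, and the prefactor $\sqrt{1+|Du|^2}$ is bounded above and below by constants depending only on $\|Du_0\|_\infty$. Consequently the equation fits in the framework of uniformly elliptic fractional parabolic equations of order $s$ treated in \cite{serra, silv}. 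Applying the interior $C^{1,\alpha}$ estimates there at a fixed time level $t_0>0$, and using the translation invariance of the operator together with the global Lipschitz bound to control the oscillation on unit cylinders, one obtains $\|Du(\cdot,t)\|_{C^\alpha(\R^n)}\leq C(t_0,\|Du_0\|_\infty)$ for every $t\geq t_0$.

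To upgrade this to the sharp decay in $t$, I would exploit the parabolic scaling invariance of \eqref{levelset}: if $u$ solves the equation, so does $u_\lambda(x,t):=\lambda^{-1}u(\lambda x,\lambda^{s+1}t)$, since $H_s$ is $(-s)$-homogeneous under dilations of the set and $Du_\lambda(x,t)=Du(\lambda x,\lambda^{s+1}t)$. Applying the fixed-time $C^{1,\alpha}$ estimate to $u_\lambda$ at time $1$ with $\lambda=t^{1/(s+1)}$ (and noting that $\|Du_\lambda\|_\infty=\|Du\|_\infty\leq \|Du_0\|_\infty$) yields $[Du(\cdot,t)]_{C^\alpha(\R^n)}\leq C\,t^{-\alpha/(s+1)}$ for every $t>0$, with $C$ depending only on $\|Du_0\|_\infty$.

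For the rescaled function, differentiating \eqref{rescf} gives $D_y\tilde u(y,\tau)=Du(y e^\tau,t(\tau))$, so
\[
[D_y\tilde u(\cdot,\tau)]_{C^\alpha(\R^n)}=e^{\alpha\tau}[Du(\cdot,t(\tau))]_{C^\alpha(\R^n)}\leq C\,e^{\alpha\tau}\left(\frac{e^{(s+1)\tau}-1}{s+1}\right)^{-\alpha/(s+1)}=C\left(\frac{1-e^{-(s+1)\tau}}{s+1}\right)^{-\alpha/(s+1)},
\]
which is monotone decreasing in $\tau$ and gives the stated bound for $\tau\geq \tau_0$. The main technical obstacle is verifying that the Serra/Silvestre regularity theory genuinely covers our operator: although it is quasilinear (the kernel $A(x,z,u)$ depends on the gradient of the unknown through the finite differences of $u$), the Lipschitz a priori bound from Theorem \ref{ex} freezes the ellipticity, so the equation can be treated as a fully nonlinear uniformly elliptic fractional parabolic problem in the sense of \cite{serra}, and the parabolic bootstrap argument of \cite{lamanna} applies to yield the $C^{1,\alpha}$ bound.
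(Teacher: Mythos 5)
Your proposal is correct and follows essentially the same route as the paper: the Lipschitz bound from Theorem \ref{ex} freezes the ellipticity of the operator in \eqref{hsgraph1}--\eqref{prova}, the Serra regularity theory (applied to the incremental quotients of $u$) gives the fixed-time $C^{1,\alpha}$ estimate, and the dilation $u_\lambda(x,t)=\lambda^{-1}u(\lambda x,\lambda^{s+1}t)$ with $\lambda=(t/t_0)^{1/(s+1)}$ produces the decay rate, followed by the same computation for $\tilde u$. Two cosmetic remarks: the operator has fractional order $1+s$ (not $s$), and the parabolic bootstrap of \cite{lamanna} is not needed here --- it is only used later, in Theorem \ref{regufinale}, for higher regularity.
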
 
  \begin{proof} If $u_0$ is Lipschitz continuous, then by Theorem \ref{ex} the solution to \eqref{levelset} is Lipschitz continuous in $x$ with  
  Lipschitz constant  bounded by $\|Du_0\|_\infty$. Then the differential operator $H_s$ on graphical hypersurfaces is elliptic, see \eqref{hsgraph1}, see e.g. \cite{serra, silv} for the definition. Then, by applying the H\"older regularity theory to the incremental quotients of $u$ (see \cite[Theorem 2.1, Theorem 2.2]{serra}) we get that they are of class $C^\alpha$ for some $\alpha\in (0,1)$, with norm bounded by the Lipschitz constant of $u$. 
  
  Finally, observe that for every $r>0$, there holds that $v_r(x,t)= r^{-1}u(rx, r^{1+s}t)$ is the viscosity solution to \eqref{levelset} with initial datum $v_0(x)=r^{-1} u_0(x)$. Then $v_0$ is Lipschitz continuous, with the same Lipschitz constant as $u_0$, and we may apply to $v_r$ the same regularity results as for $u$. In particular for every $t_0$ there exists 
a constant $C$ depending on $t_0$ and on $\|Du_0\|_\infty$ such that for all $t\geq t_0$, 
$ \|D v_r(\cdot, t)\|_{C^{\alpha}} \leq C. $
Rescaling back to $u$,  for every $t\geq t_0 r^{s+1}$ we get
\[ \|D u(\cdot, t)\|_{C^{\alpha}} \leq C r^{-\alpha}. \]   
We conclude by choosing $r= (t/t_0)^{\frac{1}{s+1}}$. 

Finally we consider the rescaled solution $\tilde u$. Note that $D\tilde u(y,\tau)=Du\left( y e^\tau, \frac{e^{\tau(s+1)}-1}{s+1}\right)$ and 
$\|D\tilde u(\cdot, t)\|_{C^{\alpha}(\R^n)}= e^{\tau \alpha} \|D u(\cdot, \frac{e^{\tau(s+1)}-1}{s+1})\|_{C^{\alpha}(\R^n)}$. 
Then by the previous estimate,    for every $\tau\geq \tau_0$, 
 \[\|D\tilde u(\cdot, \tau )\|_{C^{\alpha}(\R^n)}\leq e^{\alpha \tau}  C\left(\frac{e^{\tau(s+1)}-1}{s+1}\right) ^{-\frac{\alpha}{s+1}} \leq 
 C \left(\frac{1-e^{-\tau_0(s+1)}}{s+1}\right)^{-\frac{\alpha}{s+1}}. \]
  \end{proof}
  
Let now $u(x,t)$ be a  $C^{2,1}$  solution to \eqref{levelset} with initial datum $u_0$, and 
 let \[w(x,t):= \sqrt{1+|Du(x,t)|^2}  H_s((x,u(x,t)), E_t).\] 
 Since $u_t(x,t)= -w(x,t)$, using \eqref{hsgraph1}
 we compute
 \begin{eqnarray*} \nonumber  w_t &=&  -\frac{H_s((x,u(x,t)), E_t)}{\sqrt{1+|Du|^2}} Du(x,t)\cdot Dw(x,t)  +\\
 && +2 \sqrt{1+|Du|^2} \int_{\R^n} G_s'\left(\frac{u(x+z,t)-u(x,t)}{|z|}\right) \frac{w(x+z, t)-w(x,t)}{|z|^{n+s+1}}dz.
 \end{eqnarray*} 
 Therefore $w$ is a solution to
  \begin{equation}\label{vhs2} w_t+ B(x,t) \cdot Dw(x,t)  -2  \int_{\R^n} C(x,z,t) \frac{w(x+z, t)-w(x,t)}{|z|^{n+s+1}}dz=0,
  \end{equation} 
 where 
 \begin{equation}\label{coeff} B(x,t):= \frac{H_s((x,u(x,t)), E_t)}{\sqrt{1+|Du(x,t)|^2}} Du(x,t)\qquad C(x,z,t):=\sqrt{1+|Du(x,t)|^2}G_s'\left(\frac{u(x+z,t)-u(x,t)}{|z|}\right).\end{equation}
 
  \begin{lemma}\label{propbound} Let $u_0:\R^n\to \R$ be a   Lipschitz continuous function such that
   $H_s((x,u_0(x)), E_0)$ is bounded, and let $u(x,t)$ be the viscosity solution to \eqref{levelset} with initial datum $u_0$.

   Then $C(x,z,t)$ is well defined for every $x, z\in \R^n, t>0$,  $C(\cdot,z,t)\in C^\alpha(\R^n)$ and 
  \[0<(1+\|Du_0\|^2_\infty)^{-\frac{n+s+1}{2}}\leq C(x,z,t)\leq \sqrt{1+\|Du_0\|^2_\infty}. \]
 Moreover, the following inequalities hold in the viscosity sense:
 \[ -\frac{1}{2}C\leq   \frac{H_s((x,u(x,t)), E_t)}{\sqrt{1+|Du(x,t)|^2}} |Du(x,t)| \leq \frac{1}{2}C, \]
 where $C=  \| H_s((x,u_0(x)), E_0)\|_\infty (1+\|Du_0\|_\infty)$. 
 \end{lemma}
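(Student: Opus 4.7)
The plan is to handle the two assertions separately, invoking the Lipschitz estimate from Theorem~\ref{ex} and the $C^{1+\alpha}$ regularity from Proposition~\ref{reg} as the main ingredients.

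For the bounds on $C(x,z,t)$, I would use that $\|Du(\cdot,t)\|_\infty\le\|Du_0\|_\infty$, so that $\sqrt{1+|Du(x,t)|^2}\in[1,\sqrt{1+\|Du_0\|_\infty^2}]$. Since the difference quotient $(u(x+z,t)-u(x,t))/|z|$ is pointwise controlled by $\|Du_0\|_\infty$, and $G_s'(w)=(1+w^2)^{-(n+s+1)/2}$ is even, positive, and decreasing for $w\ge 0$, one obtains
\[
G_s'\!\left(\frac{u(x+z,t)-u(x,t)}{|z|}\right)\in\left[(1+\|Du_0\|_\infty^2)^{-(n+s+1)/2},\,1\right].
\]
Multiplying these two pointwise ranges yields the claimed two-sided bound on $C(x,z,t)$. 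The $C^\alpha$ regularity in $x$ then follows by composing the smooth map $G_s'$ with the $C^\alpha$ function $x\mapsto (u(x+z,t)-u(x,t))/|z|$ and by multiplying by the $C^\alpha$ function $x\mapsto\sqrt{1+|Du(x,t)|^2}$, both regularities being provided by Proposition~\ref{reg}.

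For the viscosity bound on $w:=\sqrt{1+|Du|^2}H_s$, I would exploit the vertical translation invariance of $H_s$. Setting $M:=\sqrt{1+\|Du_0\|_\infty^2}\,\|H_s(\cdot,E_0)\|_\infty$, I claim that $u_0(x)\pm Mt$ are viscosity super- and subsolutions of \eqref{levelset}: for a test function touching $u_0(x)\pm Mt$ at some point $(x_0,t_0)$, the nonlocal term reduces to $\sqrt{1+|Du_0(x_0)|^2}H_s((x_0,u_0(x_0)),E_0)$ by translation invariance in $z$, whose absolute value does not exceed $M$ by the choice of $M$ and the hypothesis that $\|H_s(\cdot,E_0)\|_\infty<\infty$. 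The comparison principle in Theorem~\ref{ex} then gives $|u(x,t)-u_0(x)|\le Mt$. Iterating via the time-translation invariance of the equation — $u(\cdot,\cdot+h)$ is the viscosity solution with initial datum $u(\cdot,h)$, and $u(\cdot,h)\pm Mh$ provide barriers — one obtains $|u(x,t+h)-u(x,t)|\le Mh$ for all $t,h\ge 0$. A standard test-function argument converts this Lipschitz-in-time estimate into the viscosity inequality $|u_t|\le M$, and hence $|w|\le M$ in the viscosity sense. Finally, from $2|Du|/(1+|Du|^2)\le 1$,
\[
\left|\frac{H_s\,|Du|}{\sqrt{1+|Du|^2}}\right|=\frac{|w|\,|Du|}{1+|Du|^2}\le\frac{|w|}{2}\le\frac{M}{2}\le\frac{1}{2}C,
\]
where the last inequality uses $\sqrt{1+\|Du_0\|_\infty^2}\le 1+\|Du_0\|_\infty$.

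The main obstacle I anticipate is the passage from the uniform modulus of continuity $|u(x,t+h)-u(x,t)|\le Mh$ to the genuine viscosity inequality $|u_t|\le M$, and then to the pointwise bound on $w$: this step relies crucially on both the comparison principle and the time-translation invariance of \eqref{levelset}, applied to the shifted solutions $u(\cdot,\cdot+h)\pm Mh$, rather than on any pointwise differentiation of $u$. Once the Lipschitz-in-time estimate is secured, everything else reduces to explicit algebraic manipulations and to the cited existence, comparison, and regularity results.
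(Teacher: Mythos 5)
Your proposal is correct and follows essentially the same route as the paper: the two-sided bound on $C(x,z,t)$ from the Lipschitz bound $\|Du(\cdot,t)\|_\infty\le\|Du_0\|_\infty$ and the monotonicity of $G_s'$, then the barriers $u_0\pm Ct$ plus the time-shift/comparison argument to get $|u_t|\le C$ in the viscosity sense, and finally the elementary inequality $|Du|/(1+|Du|^2)\le 1/2$. The only (harmless) difference is your slightly sharper intermediate constant $M=\sqrt{1+\|Du_0\|_\infty^2}\,\|H_s(\cdot,E_0)\|_\infty\le C$.
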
 
  
  \begin{proof} 
  First of all we observe that by Theorem \ref{ex},  $u(\cdot, t)$ is Lipschitz with $\|Du(x,t)\|_\infty\leq \|Du_0\|_\infty$. 
Then the regularity and the  bounds on $C(x,z,t)$ are a direct consequence of the definition of $G_s$. 
 
  Let $C:= \| H_s((x,u_0(x)), E_0)\|_\infty (1+\|Du_0\|_\infty)$. 
 Note that $u_0(x)\pm Ct$ are respectively  a supersolution and a subsolution to \eqref{levelset}, so that by comparison we get
 \[u_0(x)-Ct\leq u(x,t)\leq u_0(x)+Ct\qquad \text{for all $t\geq 0$.}
 \] 
 Moreover for every $t\ge \tau>0$, the functions $u(x, t)\pm \sup_x|u(x, \tau)- u_0(x)|$ are respectively a supersolution and a subsolution to \eqref{levelset} 
 with initial datum $u(x, \tau)$, whence
 $$|u(x, t+\tau)- u(x,t)|\leq \sup_x|u(x, \tau)- u_0(x)|\leq C\tau .$$ 
 This implies that $u(x, \cdot)$ is Lipschitz continuous with  $|u_t(x, t)|\leq C$,  which in turns implies that, in the viscosity sense,  
 \[ -C\leq \sqrt{1+|Du(x,t)|^2}  H_s((x,u(x,t)), E_t)\leq C\] for all $x\in \R^n$ and $t>0$.  We now conclude recalling that $u(\cdot, t)\in C^1$. 
  \end{proof}

\begin{theorem}\label{regufinale} 
Let $u_0:\R^n\to \R$ be a    Lipschitz continuous function such that   $H_s((x,u_0(x)), E_0)$ is bounded. Let $u(x,t)$ be the viscosity solution to \eqref{levelset} with initial datum $u_0$. 

Then $u\in C^\infty$,  with norms   bounded in $\R^n\times [t_0, +\infty)$ by constants   depending on $t_0, \|Du_0\|_\infty$ and $\|H_s((x,u_0(x)), E_0)\|_\infty$. 
 
Finally, the map $t\mapsto \sup_{x\in\R^n}\sqrt{1+|Du(x,t)|^2} |H_s((x,u(x,t)), E_t)|$ is decreasing  in $t$. 
\end{theorem}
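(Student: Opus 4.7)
The plan is to bootstrap regularity from the linear nonlocal parabolic equation \eqref{vhs2} satisfied by $w := u_t = -\sqrt{1+|Du|^2}\,H_s((x,u),E_t)$, following the scheme of \cite{lamanna}. To handle $w$ as a classical object I would first approximate the initial datum $u_0$ by smooth $u_0^\eps$ preserving uniform bounds on $\|Du_0^\eps\|_\infty$ and on $\|H_s((x,u_0^\eps(x)),E_0^\eps)\|_\infty$. By \cite{lamanna} these give smooth short-time solutions $u^\eps$, and the aim is to derive estimates uniform in $\eps$ so that, by stability of viscosity solutions together with the comparison in Theorem \ref{ex}, the bounds transfer to the original viscosity solution $u$.

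For such a smooth $u^\eps$, Proposition \ref{reg} furnishes a uniform $C^{1,\alpha}$ bound in space on $\R^n\times[t_0,\infty)$, and Lemma \ref{propbound} provides a uniform $L^\infty$ bound on $w^\eps$. The coefficients in \eqref{vhs2} are then uniformly elliptic, with $B$ bounded and $C(\cdot,z,t)\in C^\alpha$ with controlled norm. Applying the Hölder regularity for linear nonlocal parabolic equations of \cite[Theorems 2.1--2.2]{serra} (see also \cite{silv}) to $w^\eps$ yields $w^\eps\in C^{\alpha'}$. From here the bootstrap argument of \cite{lamanna} takes over: additional Hölder regularity of $u^\eps$ in $x$ and $t$ improves the regularity of the coefficients in \eqref{vhs2} as well as in the analogous linear equations satisfied by the spatial difference quotients of $u^\eps$, and iterated applications of nonlocal Schauder estimates gain one derivative at each step. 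This produces $u^\eps\in C^\infty(\R^n\times[t_0,\infty))$ with bounds depending only on $t_0$, $\|Du_0\|_\infty$ and $\|H_s((x,u_0),E_0)\|_\infty$; passing to the limit $\eps\to 0$ yields the smoothness of $u$.

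The monotonicity of $t\mapsto\sup_x\sqrt{1+|Du|^2}\,|H_s|=\sup_x|w(\cdot,t)|$ is then a consequence of the maximum principle applied to \eqref{vhs2}. Since any constant solves \eqref{vhs2} (the drift $B$ multiplies $Dw$ and the nonlocal kernel annihilates constants), comparing $w^\eps$ at time $t$ with the constants $\sup_x w^\eps(\cdot,t_0)$ and $\inf_x w^\eps(\cdot,t_0)$ shows that $t\mapsto\sup_x w^\eps(\cdot,t)$ is non-increasing and $t\mapsto\inf_x w^\eps(\cdot,t)$ is non-decreasing. Hence $\sup_x|w^\eps(\cdot,t)|$ is non-increasing, a property preserved by the limit $\eps\to 0$. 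The main obstacle is the bootstrap step itself: in the local case $C^{1,\alpha}$ regularity immediately triggers full $C^\infty$ parabolic smoothing, whereas here the quasilinear nonlocal structure forces a delicate iteration through the linearized equation, and it is precisely the hypothesis that $H_s((x,u_0(x)),E_0)$ is bounded which provides, via Lemma \ref{propbound}, the Lipschitz-in-time control needed to initiate the argument.
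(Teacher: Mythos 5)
Your proposal is correct and follows essentially the same route as the paper: approximate by smooth data, invoke the short-time existence of \cite{lamanna}, control $w=-u_t$ via the linearized equation \eqref{vhs2} using Lemma \ref{propbound} and the parabolic regularity of \cite{silv}, then bootstrap (the paper passes through the elliptic bootstrap of \cite{bfv} applied to $\sqrt{1+|Du|^2}H_s=-w$ at fixed time before the parabolic bootstrap of \cite{lamanna}), and obtain the monotonicity of $\sup_x|w(\cdot,t)|$ by comparing with constants in \eqref{vhs2}.
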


\begin{proof} We  first prove the result under the additional assumption that $u_0$ is in $C^{2+\alpha}$ with bounded norms. Then  the general case will follow by the stability of viscosity solutions with respect to uniform convergence, recalling that all the estimates depend only on the Lipschitz constant of $u_0$ and on $\|H_s((x,u_0(x)), E_0)\|_\infty$.

The short time existence result in \cite[Theorem 5.1]{lamanna} implies that, if $u_0\in C^{2+\alpha}$ with bounded norms, then there exists a time $t>0$, such that the system 
\eqref{levelset} admits a smooth solution $u(x,t)$.  

Let  $w(x,t)$ be the unique   solution to \eqref{vhs2} with  
 initial datum $\sqrt{1+|Du_0(x)|^2} H_s((x,u_0(x)), E_0)$.  Since $u$ is smooth, by the computations in  \eqref{vhs2} and \eqref{coeff}, we get that
 \[w(x,t)= \sqrt{1+|Du(x,t)|^2} H_s((x,u(x,t)), E_t).\] 
  
By  comparison $w(x,t)$ is bounded by $\|w(x,0)\|_\infty$ and $\sup_x|w(x,t+r)|\leq \sup_x|w(x,r)|$, for every $r\geq 0$ and $t>0$.

 Since $w$ is a  bounded viscosity solution of a linear integro-differential equation,  with bounded drift and uniformly elliptic integro-differential operator, the regularity results obtained in \cite[Theorem 8.1]{silv} apply. Hence  there exists $\alpha\in (0,s)$ such that
$w(\cdot,t)\in C^{1+\alpha}(\R^n)$, $w(x, \cdot) \in C^{\frac{1+\alpha}{2}}(0, +\infty)$ with
\[\sup_{t\in (0, T)} \|w(\cdot, t)\|_{ C^{1+\alpha}}+\sup_{x\in\R^n} \|w(x,\cdot)\|_{ C^{\frac{1+\alpha}{2}}} \leq C(\|Du_0\|_\infty, \|H_s((x,u_0(x)), E_0)\|_\infty, s).\]
 This implies that  $u_t(x,\cdot)= \sqrt{1+|Du(x,\cdot)|^2} H_s((x,u(x,\cdot)), E_t)\in  C^{ \frac{1+\alpha}{2}}(0, +\infty)$. 
 
 Moreover
 since $\sqrt{1+|Du(x,t)|^2} H_s((x,u(x,t), E_t)\in C^{1+\alpha}(\R^n)$ as a function of $x$, recalling that $u\in C^{1+\alpha}$ 
 by Proposition \ref{reg}, with norm bounded only by $\|Du_0\|_\infty$, 
 by the bootstrap argument in \cite[Theorem 6]{bfv} we get that $u(\cdot, t)\in C^{1+s+\alpha+\beta}(\R^n)$ for all $\beta<1$ and all $t>0$, 
 with norm bounded only on $\|Du_0\|_\infty, \|H_s((x,u_0(x)), E_0)\|_\infty$.
 
 Finally, we apply the bootstrap regularity argument obtained in  \cite{lamanna} and we get the full regularity. 
  \end{proof} 
  
 \section{Homothetically expanding   graphical solutions}\label{sectionhom} 
We shall provide a complete characterization of graphical homothetically expanding solutions to \eqref{kflow}. 

\begin{theorem}\label{omogenee}Let $\bar u_0:\R^n\to \R$  be a 
 Lipschitz continuous and positively $1$-homogeneous function, that is,
\begin{equation}\label{hom}\exists C>0\ \  |\bar u_0(x)-\bar u_0(y)|\leq C|x-y|\qquad \bar u_0(rx)=r\bar u_0 (x)\qquad \forall r>0, x,y\in \R^n.\end{equation} 
Then, for every $t>0$ the subgraph  $\bar E_t$ of  the viscosity solution $\bar u(x,t)$ to  \eqref{levelset}, with initial datum  $\bar u_0$, satisfies  for $p\in \partial \bar E_t$ 
 \begin{equation}\label{self}  
 p\cdot \nu=-t(s+1)H_s(p  , \bar E_t),\end{equation}  
 that is,  for every $t>0$  $\bar E_{t}$ satisfies \eqref{exp} with $c^{-1}=t(s+1)$, and the flow starting from $E_t$  is an expanding homothetic solution to \eqref{kflow} 
 according to Definition \ref{defhom}. 
 
Moreover  $\bar u(x,t)$     is in $C^\infty(\R^n\times (0, +\infty))$, and  for every $T >0$, \[\lim_{t\to +\infty} |\bar u(x,t+T)-\bar u(x,t)|=0\qquad \text{       locally uniformly in $\R^n$. }\]
\end{theorem}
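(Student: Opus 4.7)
My plan is to derive all assertions from the scale invariance of \eqref{levelset} combined with the uniqueness provided by Theorem \ref{ex}. For any $\lambda>0$, the function $u_\lambda(x,t):=\lambda^{-1}\bar u(\lambda x,\lambda^{s+1}t)$ is again a viscosity solution of \eqref{levelset} (as already used in the proof of Proposition \ref{reg}), and by the $1$-homogeneity \eqref{hom} its initial datum $x\mapsto\lambda^{-1}\bar u_0(\lambda x)$ coincides with $\bar u_0$. Theorem \ref{ex} then forces $u_\lambda\equiv\bar u$ for every $\lambda>0$; choosing $\lambda=t^{-1/(s+1)}$ yields the self-similar representation
\[
\bar u(x,t)=t^{1/(s+1)}\,U\!\left(t^{-1/(s+1)}x\right),\qquad U:=\bar u(\cdot,1),
\]
so that $\bar E_t=t^{1/(s+1)}\bar E_1$. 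This identifies the flow starting from any $\bar E_t$ as an expanding homothetic solution in the sense of Definition \ref{defhom} with $c=1/(s+1)$. To obtain \eqref{self}, I would differentiate the scaling identity: a point $p=t^{1/(s+1)}q\in\partial\bar E_t$ moves with velocity $\partial_t p=p/((s+1)t)$, whence \eqref{kflow} gives $p\cdot\nu=-(s+1)t\,H_s(p,\bar E_t)$.

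The main technical obstacle is the smoothness claim. Proposition \ref{reg} provides $U\in C^{1+\alpha}(\mathbb{R}^n)$ with bounded $C^{1+\alpha}$-norm, but Theorem \ref{regufinale} does not apply directly because \eqref{self} makes $H_s((y,U(y)),E_U)$ grow linearly in $|y|$, so the curvature is not globally bounded. I would instead view $U$ as a viscosity solution of the stationary nonlocal equation
\[
-\sqrt{1+|DU|^2}\,H_s\bigl((y,U(y)),E_U\bigr)=\frac{U(y)-y\cdot DU(y)}{s+1},
\]
and work on each ball $B_R\subset\mathbb{R}^n$ separately. On such a ball the right-hand side belongs to $C^\alpha$, and by Lemma \ref{propbound} the operator on the left is uniformly elliptic with ellipticity constants depending only on $\|D\bar u_0\|_\infty$. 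A standard H\"older-type bootstrap for nonlocal elliptic equations on graphs (as in \cites{serra,silv,bfv}) applied to the incremental quotients of $U$ then raises the H\"older exponent by $s$ at each step, yielding $U\in C^\infty(\mathbb{R}^n)$. The self-similar formula finally lifts this to $\bar u\in C^\infty(\mathbb{R}^n\times(0,+\infty))$.

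For the long-time asymptotics, differentiating the self-similar formula gives
\[
\partial_t\bar u(x,t)=\frac{U(y)-y\cdot DU(y)}{(s+1)\,t^{s/(s+1)}},\qquad y=t^{-1/(s+1)}x.
\]
For $|x|\le R$ fixed and $t\to+\infty$, $y\to 0$ uniformly, and since $U\in C^1(\mathbb{R}^n)$ we have $|U(y)-y\cdot DU(y)|\le C(R)$ on this range. Integrating over $[t,t+T]$,
\[
|\bar u(x,t+T)-\bar u(x,t)|\le\int_t^{t+T}|\partial_\tau\bar u(x,\tau)|\,d\tau\le\frac{C(R)\,T}{t^{s/(s+1)}}\longrightarrow 0,
\]
locally uniformly in $x$, as required.
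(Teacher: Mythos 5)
Your proposal is correct and follows essentially the same route as the paper: scale invariance plus the uniqueness in Theorem \ref{ex} gives the self-similar representation $\bar u(x,t)=t^{1/(s+1)}\bar u(t^{-1/(s+1)}x,1)$, from which \eqref{self} follows; smoothness is obtained exactly as in the paper by reading $U=\bar u(\cdot,1)$ as a solution of the stationary equation \eqref{curvfun} with locally $C^\alpha$ right-hand side and invoking the bootstrap of \cite{bfv}; and the long-time statement is the same elementary consequence of the scaling formula (the paper estimates the difference quotient via the Lipschitz bound on $U$ rather than integrating $\partial_t\bar u$, but the two computations are equivalent).
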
 

\begin{proof}
By the fact that the differential operator  is invariant under translations and by uniqueness of  solutions, see Theorem \ref{ex},  we get that  for all $r\neq 0$, it holds
\[\bar u(x,t)= \frac{1}{r} \bar u(rx, r^{s+1} t).\] 
Letting $r:=t^{-\frac{1}{s+1}}$ for $t>0$, we get 
\begin{equation} \label{cono}\bar u(x,t)=t^{\frac{1}{s+1}}\bar u(xt^{-\frac{1}{s+1}}, 1). \end{equation} 
This implies that, if $p\in \partial \bar E_1$ then $p t^{\frac{1}{s+1}}\in \bar E_t$ and  \begin{equation}\label{curv} H_s(p t^{\frac{1}{s+1}},\bar E_t)=  t^{-\frac{s}{s+1}}H_s(p  , \bar E_1).\end{equation} 
Substituting in \eqref{kflow}  we get that $ \bar E_{1 }$ solves \eqref{exp} with $c^{-1}= s+1$.  The same argument holds substituting $t=1$ with another positive time $t$. 

By  the uniform $C^{1,\alpha}$ estimate in Proposition \ref{reg} we know that $\bar u(x,1)$   is in $C^{1, \alpha}$. Moreover, since $ \bar E_{1 }$ solves \eqref{exp} with $c^{-1}= s+1$, we get that $\bar u(x,1)$ solves
\begin{equation}\label{curvfun}\bar u (x,1)-  D\bar u(x,1)\cdot x+(1+s)\sqrt{1+|D\bar u(x,1)|^2}H_s((x, \bar u(x,1)),\bar E_1) =0.\end{equation}
Therefore, since   $\sqrt{1+|D\bar u(x,1)|^2}H_s(x, \bar u (x,1),\bar E_1)$ is in $C^{\alpha}$, with norm locally bounded by the Lipschitz constant of $\bar u_0$, we can apply the bootstrap argument in \cite[Theorem 6]{bfv} and get that $\bar u(x,1)$ is in $C^\infty$. Finally, since $\bar u(x, t)= t^{\frac{1}{s+1}}\bar u(xt^{-\frac{1}{s+1}}, 1)$ for every $t>0$, we conclude that $\bar u$ is in $C^\infty(\R^n\times (0, +\infty))$. 

Now, observe that by scaling properties \eqref{cono},  for every $T>0$ and $t>0$ and by the fact that $\bar u(x,t)$ is Lipschitz continuous with the same Lipschitz constant as  $\bar u_0$, 
\begin{eqnarray*} |\bar u(x,t+T)-\bar u(x,t)|&=&  |(t+T)^{\frac{1}{s+1}}\bar u(x(t+T)^{-\frac{1}{s+1}}, 1)-t^{\frac{1}{s+1}}\bar u(xt^{-\frac{1}{s+1}}, 1)| \\
&\leq& (t+T)^{\frac{1}{s+1}}|\bar u(x(t+T)^{-\frac{1}{s+1}}, 1) - \bar u(x t^{-\frac{1}{s+1}}, 1)| \\ && + [(t+T)^{\frac{1}{s+1}}-t^{\frac{1}{s+1}}| |\bar u(xt^{-\frac{1}{s+1}}, 1)|\\
&\leq & C (t+T)^{\frac{1}{s+1}}|x| |(t+T)^{-\frac{1}{s+1}}-t^{-\frac{1}{s+1}}| + C [(t+T)^{\frac{1}{s+1}}-t^{\frac{1}{s+1}}| |x| t^{-\frac{1}{s+1}}\\
&\leq & C|x| \left(\left(1+\frac{T}{t}\right)^{\frac{1}{s+1}}- \left(1-\frac{T}{t}\right)^{\frac{1}{s+1}}\right). 
\end{eqnarray*} Sending $t\to +\infty$, we get the result. 

  \end{proof} 
By using the properties of homothetically expanding solutions, we show the following result about uniform continuity of solutions to \eqref{levelset}.
\begin{proposition}\label{holder}  Let $u_0:\R^n\to \R$ be a Lipschitz continuous function. Then the viscosity solution to \eqref{levelset} with initial datum $u_0$ 
satisfies for all $x, y\in \R^n, t,s\geq 0$ 
\[|u(x,t)-u(y,r)|\leq \|Du_0\|_\infty |x-y|+   K |t-r|^{\frac{1}{s+1}}
 \]
for some constant $K>0$ which depends only on $\|Du_0\|_\infty$. 

\end{proposition}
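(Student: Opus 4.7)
The spatial estimate $|u(x,t)-u(y,t)|\le \|Du_0\|_\infty|x-y|$ is immediate from Theorem \ref{ex}, so by the triangle inequality it suffices to prove a temporal bound of the form $|u(x,t+h)-u(x,t)|\le K h^{1/(s+1)}$, uniformly in $x\in\R^n$ and $t\geq 0$, with $K$ depending only on $L:=\|Du_0\|_\infty$. My idea is to sandwich $u$ between two barriers built from the homothetically expanding cone solution of Theorem \ref{omogenee}, whose exact temporal scaling is $h^{1/(s+1)}$.

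Fix $x_0\in\R^n$, $t_0\ge 0$, $h>0$. By Theorem \ref{ex}, $u(\cdot,t_0)$ is $L$-Lipschitz, so the affine cones
\[w_0^{\pm}(x):=u(x_0,t_0)\pm L|x-x_0|\]
sandwich the initial profile, $w_0^-(x)\le u(x,t_0)\le w_0^+(x)$ for every $x$. Denoting by $w^\pm(\cdot,\tau)$ the viscosity solutions of \eqref{levelset} with initial data $w_0^\pm$, the comparison principle of Theorem \ref{ex} applied on $[t_0,t_0+h]$ gives
\[w^-(x_0,h)\le u(x_0,t_0+h)\le w^+(x_0,h).\]
Horizontal translation invariance of \eqref{levelset} together with invariance under constant vertical shifts of $u$ (the curvature $H_s$ only sees the shape of the subgraph) yields $w^+(x,h)=u(x_0,t_0)+\hat w(x-x_0,h)$, where $\hat w$ is the viscosity solution starting from the 1-homogeneous Lipschitz datum $\hat w_0(y):=L|y|$. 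Using the identity $H_s(\cdot,E^c)=-H_s(\cdot,E)$, one checks that the flow commutes with the vertical reflection $z\mapsto -z$, hence the solution starting from $-L|y|$ is $-\hat w$, and correspondingly $w^-(x,h)=u(x_0,t_0)-\hat w(x-x_0,h)$.

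Since $\hat w_0$ is Lipschitz and positively 1-homogeneous, Theorem \ref{omogenee} applies and provides the self-similar identity
\[\hat w(y,h)=h^{1/(s+1)}\hat w\bigl(y h^{-1/(s+1)},1\bigr).\]
Evaluating at $y=0$ and setting $K:=\hat w(0,1)$, the two inequalities above collapse to
\[|u(x_0,t_0+h)-u(x_0,t_0)|\le K h^{1/(s+1)},\]
where $K$ depends only on $L$, $s$, $n$. Combining with the spatial Lipschitz estimate yields the claim.

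I do not expect serious obstacles: the only non-routine checks are (i) the vertical-reflection symmetry of \eqref{levelset}, which reduces to the oddness $H_s(p,E^c)=-H_s(p,E)$ of the fractional curvature together with the change of variable $(x,z)\mapsto (x,-z)$ in \eqref{levelset}; and (ii) finiteness of $K=\hat w(0,1)$, which follows from the continuity of $\hat w$ on $\R^n\times[0,+\infty)$ guaranteed by Theorem \ref{ex}. A posteriori, consistency of the two-sided bound forces $K\ge 0$, in agreement with the geometric picture that the V-shaped corner of $\hat w_0$ fills in under the flow.
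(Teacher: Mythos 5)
Your proof is correct and follows essentially the same route as the paper: both arguments sandwich $u$ between translates of the cone barriers $\pm L|x|$, invoke the comparison principle of Theorem \ref{ex}, and extract the $h^{1/(s+1)}$ modulus from the exact self-similar scaling of Theorem \ref{omogenee}. The only cosmetic difference is that you obtain the lower barrier via the vertical-reflection symmetry $H_s(p,E^c)=-H_s(p,E)$ (which is valid), whereas the paper simply runs the analogous argument starting from $-L|x|$; both yield the same constant.
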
 
\begin{proof}
   We prove just the H\"older continuity in time, since the Lipschitz  continuity has already been proved in Theorem \ref{ex}.  Let $C=\|D u_0\|_\infty$.
 
Let  $v_0(x)=C|x|$.  Since $v_0$ satisfies \eqref{hom},   the solution $v_C(x,t)$ with initial datum $v_0$ is a homothetically expanding solution to \eqref{levelset}. Moreover, since  $H_s((x,C|x|), \{(x',z'), \ |\ z'\leq C|x'|\})\leq 0$  in the viscosity sense at every $x\in \R^n$,  $v_0(x)$ is a stationary subsolution to \eqref{levelset}, 
which implies by comparison that $v_C(x,t)\geq v_0(x)$ for every $t>0$, and then, again by comparison that $v_C(x,t)\geq v_C(x, r)$ if $0<r<t$.  

Let us fix $x_0\in \R$. By Lipschitz continuity we get that $u_0 (x+x_0)\leq v_0(x)+u_0(x_0)$, so by comparison we conclude that $  u(x+x_0,t)\leq v_C(x,t) + u_0(x_0)$, for all $x$. If we compute the previous inequality in $x=0$, using also \eqref{defu} we get
\[ u(x_0,t)- u_0(x_0)\leq v_C(0,t)= v_C(0,1) t^{\frac{1}{s+1}}\qquad \forall x_0\in\R^n.\]
By comparison this implies that
\[  u(x_0,t+r)-  u(x_0,t)\leq v_C(0,1) r^{\frac{1}{s+1}}\qquad \forall x_0\in\R^n.\]
The other inequality is obtained with an analogous argument by considering $-C|x|$.  \end{proof} 
  \begin{remark} We observe that if $\bar u_0$ in Theorem \ref{omogenee} is also  a convex function, then there exists $C>0$ such that   \begin{equation}\label{claim1} \sup_{x\in\R^n} |H_s(x, \bar u(x,t))|\leq C t^{-\frac{s}{s+1}}\qquad \forall t>0.\end{equation}   We conjecture that  this  property  is actually true also for nonconvex functions $\bar u_0$, which satisfy \eqref{hom}, as in the local setting.   
Observe that due to  \eqref{curv} it is sufficient to show that  that there exists $C>0$ such that  \[ \sup_{x\in\R^n} | H_s(x, \bar u (x,1))|\leq C,\] 
which is in turn equivalent by \eqref{curvfun} to show that  
\[\sup_{x\in \R^n}|\bar u(x,1)-D\bar u(x,1)\cdot x|\leq C(s+1)^{-1}.\]  

Note that $\bar u(x,1)$ is a convex function, since convexity is preserved by the fractional flow \eqref{kflow}, see \cite{cnr}. Moreover $\bar u(x,1)\geq \bar u_0(x)$ by comparison (since $\bar u_0(x)$ is a stationary subsolution to \eqref{levelset}).  For $\lambda>0$, and $x\in \R^n$ fixed, we define
$v(\lambda)= \bar u(\lambda x,1)$.  Note that this function is convex. By convexity we get that
\[ \bar u(0,1)=v(0)\geq v(1)-v'(1)=\bar u(x,1)-D\bar u(x,1)\cdot x.\]
Recalling Proposition \ref{holder} we get that there exists $K>0$ depending only on the Lipschitz constant of $\bar u_0(x)$ such that $\bar u_0(\lambda  x)+ K\geq \bar u(\lambda x,1)$ for every $\lambda\in \R$. So using again convexity of $v$ and by \eqref{hom} we get  for $\lambda\geq 0$, 
\[ \lambda \bar u_0( x)+ K=\bar u_0(\lambda  x)+ K\geq \bar u(\lambda x,1)=v(\lambda)\geq v(1)+v'(1)(\lambda-1)=\bar u(x,1)+(\lambda-1)D\bar u(x,1)\cdot x .\] 
This implies, sending $\lambda\to +\infty$,  that $\bar u_0(x)\geq D\bar u(x,1)\cdot x$ and so in turn
\[\bar u(x,1)-D\bar u(x,1)\cdot x\geq \bar u(x,1)-\bar u_0(x)\geq 0.\]
So, we proved that 
\[ 0\leq \bar u(x,1)-D\bar u(x,1)\cdot x\leq \bar u(0,1)\] which gives the result. 

  \end{remark} 
  \begin{remark}\label{convexcono}
Observe that if \eqref{claim1} is satisfied,  then we may strengthened the convergence result in Theorem \ref{hom}, that is 
  for every $T>0$ it holds  that \[\lim_{t\to +\infty} |\bar u(x,t+T)-\bar u(x,t)|=0\qquad \text{   uniformly in $\R^n$. }\] 
  Indeed substituting in the equation \eqref{levelset} and recalling that $\bar u(x,t)$ is uniformly Lipschitz, we get that $|\bar u_t(x,t)|\leq C't^{-\frac{s}{s+1}}$, for some $C'>0$ depending on $C$ and on the Lipschitz norm of $u_0$. So, integrating we get, for all $T>0$
\[|\bar u(x, t+T)-\bar u(x,t)|\leq C \left((t+T)^{\frac{1}{s+1}}-t^{\frac{1}{s+1}}\right) \]
and so for every $T>0$,  
\[\lim_{t\to +\infty} |\bar u(x,t+T)-\bar u(x,t)|=0\qquad \text{   uniformly in $\R^n$. }\]
\end{remark} 
 On the other hand, every homothetically expanding graphical solution to \eqref{kflow} is obtained as in Theorem \ref{omogenee}. 
\begin{proposition}\label{propchar} Assume that $E_1$ is a solution to \eqref{exp}  and that is the subgraph of  a Lipschitz continuous function $u_1:\R^n \to \R$.

Then  the solution $u(x,t)$ to \eqref{levelset} with initial datum $u(x,1)=u_1(x)$ is defined in $\R^n\times(t_0, +\infty)$ where $t_0=1-\frac{1}{c(s+1)}$ and  satisfies 
\begin{equation}\label{conv} \lim_{t\to t_0^+} u(x,t)= \bar u(x)\qquad \text{locally uniformly in $x$,} \end{equation}
where  $\bar u:\R^n\to\R^n$ is Lipschitz continuous and $1$-homogeneous as in  \eqref{hom}.
 \end{proposition}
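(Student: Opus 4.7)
The plan is to construct $u(x,t)$ on $(t_0,\infty)$ explicitly by homothetic scaling and then extract $\bar u$ as a uniform-in-$x$ limit at $t\to t_0^+$, using the H\"older-in-time estimate from Proposition~\ref{holder}.

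First I define $\lambda(t):=[c(s+1)(t-1)+1]^{1/(s+1)}$, so that $\lambda(1)=1$, $\lambda(t_0)=0$ and $\lambda'=c\lambda^{-s}$, and set
\[
u(x,t):=\lambda(t)\,u_1\!\left(\frac{x}{\lambda(t)}\right),\qquad t>t_0.
\]
Writing $y=x/\lambda(t)$, using the scaling $H_s(\lambda p,\lambda E_1)=\lambda^{-s}H_s(p,E_1)$ and the expander equation \eqref{exp}, which in graphical form reads $\sqrt{1+|Du_1|^2}\,H_s((y,u_1),E_1)=-c(u_1-y\cdot Du_1)$, a direct computation yields
\[
u_t=\lambda'\,(u_1-y\cdot Du_1),\qquad \sqrt{1+|Du|^2}\,H_s\bigl((x,u),E_t\bigr)=-c\lambda^{-s}\,(u_1-y\cdot Du_1),
\]
so \eqref{levelset} holds, since $\lambda'=c\lambda^{-s}$. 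Because \eqref{kflow} is geometric and scale invariant, the identity persists in the viscosity sense, and by Theorem~\ref{ex} applied forward from any $r\in(t_0,\infty)$ this is the unique viscosity solution on $\R^n\times(t_0,\infty)$ with $u(\cdot,1)=u_1$.

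Next I note that $Du(x,t)=Du_1(x/\lambda(t))$, so $\|Du(\cdot,t)\|_\infty=\|Du_1\|_\infty$ for every $t>t_0$. By time-translation invariance of \eqref{levelset}, Proposition~\ref{holder} applied to the forward evolution from an arbitrary initial time $r_0\in(t_0,t]$ gives
\[
|u(x,t)-u(x,r)|\le K\,|t-r|^{1/(s+1)}\qquad \text{for all } t_0<r\le t,\;x\in\R^n,
\]
with $K$ depending only on $\|Du_1\|_\infty$, and in particular independent of $r_0$. Hence $\{u(\cdot,r)\}_{r>t_0}$ is uniformly Cauchy as $r\to t_0^+$, so $\bar u(x):=\lim_{t\to t_0^+}u(x,t)$ exists uniformly on $\R^n$ (hence a fortiori locally uniformly) and satisfies $\|D\bar u\|_\infty\le\|Du_1\|_\infty$.

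Finally, $1$-homogeneity of $\bar u$ is read off from the explicit formula: for $r>0$ define $t'=t'(t,r)$ by $\lambda(t')=\lambda(t)/r$ (a continuous bijection of $(t_0,\infty)$ with itself satisfying $t'\to t_0^+$ iff $t\to t_0^+$); then
\[
u(rx,t)=\lambda(t)\,u_1\!\left(\frac{rx}{\lambda(t)}\right)=r\,\lambda(t')\,u_1\!\left(\frac{x}{\lambda(t')}\right)=r\,u(x,t'),
\]
and letting $t\to t_0^+$ gives $\bar u(rx)=r\bar u(x)$. The only delicate point in the whole argument is transferring the quantitative H\"older-in-time estimate of Proposition~\ref{holder} to the backward expander; this succeeds precisely because the homothety preserves the Lipschitz constant of $u(\cdot,r)$ exactly, so that the constant $K$ does not degenerate as $r\to t_0^+$. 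The rest is a soft limit argument.
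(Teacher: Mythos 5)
Your proof is correct, and for the key step (existence and uniqueness of the limit as $t\to t_0^+$) it takes a genuinely different route from the paper. The paper applies Ascoli--Arzel\`a to the equi-Lipschitz family $v_r(x)=ru_1(x/r)$ to extract subsequential locally uniform limits, and then identifies the limit uniquely by a stability argument for viscosity solutions: any subsequential limit $\bar v$ generates an evolution which must coincide with $u(\cdot,\cdot+t_0)$, forcing all subsequential limits to agree. You instead observe that the homothety preserves the Lipschitz constant exactly, so Proposition \ref{holder}, applied forward from each time $r_0\in(t_0,t]$ with a constant $K$ depending only on $\|Du_1\|_\infty$, yields the uniform-in-$x$ H\"older bound $|u(x,t)-u(x,r)|\le K|t-r|^{1/(s+1)}$ all the way down to $t_0$; this makes $\{u(\cdot,r)\}$ uniformly Cauchy and gives the limit directly, with no compactness or diagonal argument, and in fact with \emph{global} uniform convergence on $\R^n$ (stronger than the locally uniform convergence claimed). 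Your derivation of $1$-homogeneity from the reparametrization $\lambda(t')=\lambda(t)/r$ is also cleaner than the paper's ``easy to check.'' Two small points to keep in mind: (i) your use of Proposition \ref{holder} is not circular, since its proof rests on the scaling identity \eqref{cono} from Theorem \ref{omogenee} rather than on the present proposition (the reference to \eqref{defu} there is a forward slip in the paper); (ii) Theorem \ref{ex} gives forward uniqueness from each time $r\in(t_0,\infty)$, not backward uniqueness from $t=1$, so strictly speaking the explicit homothetic formula \emph{defines} the solution on $(t_0,1)$ rather than being forced by uniqueness --- but this matches the paper's own level of rigor and does not affect the statement.
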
  

\begin{proof} 
 According to Definition \ref{defhom}, 
\[  E_t= \left[c(s+1)(t-1)+1\right]^{\frac{1}{s+1}} E_1=\lambda(t)E_1\]   is a solution to \eqref{kflow}, which means in particular that  the solution $u$ to \eqref{levelset} with initial datum $u(x,1)=u_1(x)$ can be obtained as 
\begin{equation}\label{defu} u(x,t):=\lambda(t)u_1\left(\frac{x}{\lambda(t)}\right)=\left[c(s+1)(t-1)+1\right]^{\frac{1}{s+1}} u_1\left(x \left[c(s+1)(t-1)+1\right]^{-\frac{1}{s+1}}\right).  \end{equation}
This implies immediately that $u$ is well defined in $\R^n\times (t_0, +\infty)$, where $t_0=1-\frac{1}{c(s+1)}$.  

Since $u_1$ is Lipschitz continuous, we get that $v_r(x)= ru_1(x/r)$ are equilipschitz, and moreover $|v_r(x)|\leq  r|u_1(0)|+\|Du_1\|_\infty |x|$. 
Then, by Ascoli Arzel\`a theorem,  up to subsequences there exist the limits   $\lim_{r_n\to 0^+} v_{r_n}(x)$, locally uniformly in $x$.  We claim that  actually the limit is unique, that is $\bar u(x):=\lim_{r\to 0^+} ru_1(x/r)$ locally uniformly in $x$.  If the claim is true, then it is easy to check that $\bar u(x)$ satisfies \eqref{hom} and moreover by \eqref{defu},   $\lim_{t\to t_0^+}u(x, t)=\lim_{t\to t_0^+} \lambda(t)u_1\left(\frac{x}{\lambda(t)}\right)=\lim_{r\to 0^+} ru_1(x/r)=\bar u(x)$. 

To prove the claim we observe that by \eqref{defu} $v_r(x) = ru_1(x/r)= u(x, \lambda^{-1}(r))$ for every $r>0$. 
Let $r_n\to 0$ such that $\lim_{r_n\to 0} v_{r_n}(x)= \bar v(x)$. 
Then   $u_{n}(x,t):= u(x, t+\lambda^{-1}(r_n))$  is the viscosity solution to \eqref{levelset} with $u_n(x,0)= v_{r_n}(x)$.  By stability of viscosity solutions with respect to uniform convergence, since $v_{r_n}\to \bar v$, we get that $u_n(x,t)\to \bar v(x,t)$ locally uniformly, where $\bar v(x,t)$ is the solution to \eqref{levelset} with initial datum $\bar v$. But actually $\bar v(x,t)= u(x, t+t_0)$ for every $t>0$, and then the limit $\bar v$ is unique and independent of the subsequence. 
 \end{proof}

\section{Convergence to  self-similar solutions} \label{sectionconv} 
We show that homotetically expanding solutions are the long-time attractors for the flow of Lipschitz graphs, when  the initial datum is a sublinear  perturbation of a $1$-homogeneous function.

We now generalize to the fractional curvature flows the result obtained in \cite{eh} for the mean curvature flow, under the  assumption 
 that the initial datum $u_0$ is Lipschitz continuous and is straight at infinity in the following sense: 
 There exists $K>0$ and $\delta>0$ such that
 \[|u_0(x)-Du_0(x)\cdot x|\leq K(1+|x|)^{1-\delta}\qquad \forall x\in \R^n. \] 
 This condition is equivalent to the following one: there exist a function $\bar u_0$ which satisfies \eqref{hom} and 
 constants $K>0$, $\delta>0$ such that 
\begin{equation}\label{straight} |u_0(x)- \bar u_0(x)|\leq K(1+|x|)^{1-\delta}. \end{equation} 

  Note that, if we define $\phi(r, x):=ru_0\left(\frac{x}{r}\right)$ for all $x\in \R^n$, then the straight at infinity condition reads, for $r\leq 1$, 
 \[\left|\frac{\partial }{\partial r} \phi(r,x)\right|= \left|u_0\left(\frac{x}{r}\right)- \frac{x}{r} \cdot Du_0\left(\frac{x}{r}\right)\right|\leq K(r +|x|)^{1-\delta} r^{\delta-1}\leq K(1+|x|)^{1-\delta}r^{\delta-1}.\]
 Then, for all $0< r_1<r_2\leq 1$ we get 
 \[\left| r_2u_0\left(\frac{x}{r_2}\right)-r_1u_0\left(\frac{x}{r_1}\right)\right|=\left|\int_{r_1}^{r_2} \frac{\partial }{\partial r} \phi(r,x)dr\right|  \leq  \frac{K}{\delta} (1+|x|)^{1-\delta} (r_2^{\delta}-r_1^\delta). \] 
  Observe that, since $u_0$ is Lipschitz continuous, up to a subsequence there exists the limit $\lim_{r\to 0^+} ru_0\left(\frac{x}{r}\right) $, 
  which is locally uniform in $x$.  
  By the previous inequality, we conclude that the limit is unique,   so that the limit is a function $\bar u_0$ which satisfies \eqref{hom} and finally  \[\left|\int_0^1 \frac{\partial }{\partial r} \phi(r,x)dr\right | =  |u_0(x)-  \bar u_0(x)| \leq  \frac{K}{\delta} (1+|x|)^{1-\delta} .\]   
 
 Actually, the convergence result proved in \cite{eh} is stronger than ours, since they provide exponential in time convergence of the flows. 

   \begin{theorem}\label{convthm2}  
   Let $u_0$ be a Lipschitz continuous function,  such that there exist  $\bar u$ which satisfies \eqref{hom}, and constants $K>0$, $\delta\in (0,1)$  for which \eqref{straight} holds.  
   Let  $u$ and  $\tilde u$ be respectively the solutions   to \eqref{levelset} and \eqref{levelsetresc} with  initial datum $u_0$, and $\bar u(x,t)$ be the  solution to \eqref{levelset} with initial datum $\bar u_0(x)$. Then  
 \[\lim_{\tau\to +\infty} \tilde u(y,\tau)= \bar u\left(y,\frac{1}{s+1}\right) \qquad \text{ locally uniformly in $C^1(\R^n)$}.\] In particular,  the rescaled flow \begin{equation}\label{rescev}  \frac{1}{[(s+1)t+1]^{1/(s+1)}} E_t \end{equation}  where $E_t$  is the subgraph of $u(\cdot, t)$,
converges  as $\tau\to +\infty$ to a graphical hypersurface $\tilde E$ which satisfies \eqref{exp} (with $c=1$).\end{theorem}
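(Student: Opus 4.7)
The plan is to exploit the scaling invariance of \eqref{levelset} under $u^\lambda(x,t) := \lambda^{-1} u(\lambda x, \lambda^{s+1} t)$ for $\lambda>0$. By the scaling $H_s(\lambda p, \lambda E) = \lambda^{-s} H_s(p,E)$ of fractional curvature, each $u^\lambda$ is the viscosity solution of \eqref{levelset} with initial datum $u_0^\lambda(x) := \lambda^{-1} u_0(\lambda x)$. A direct substitution with $\lambda = e^\tau$ yields the key identity
\[
\tilde u(y, \tau) \;=\; u^{e^\tau}\!\left(y,\, \frac{1-e^{-(s+1)\tau}}{s+1}\right),
\]
so the convergence of $\tilde u(\cdot,\tau)$ as $\tau \to +\infty$ reduces to the convergence of $u^\lambda(\cdot, T)$ as $\lambda \to +\infty$ and $T \to 1/(s+1)$.

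First I would establish convergence of initial data. Using the $1$-homogeneity of $\bar u_0$ (so $\lambda^{-1} \bar u_0(\lambda x) = \bar u_0(x)$) together with \eqref{straight}, for $\lambda \geq 1$
\[
|u_0^\lambda(x) - \bar u_0(x)| \;=\; \lambda^{-1}|u_0(\lambda x) - \bar u_0(\lambda x)| \;\leq\; K \lambda^{-\delta}(1+|x|)^{1-\delta},
\]
so $u_0^\lambda \to \bar u_0$ locally uniformly, with $\mathrm{Lip}(u_0^\lambda) \leq \|Du_0\|_\infty$ uniformly in $\lambda$. By Theorem \ref{ex} the family $\{u^\lambda(\cdot,t)\}$ is then equilipschitz in $x$, and by Proposition \ref{holder} uniformly $(1/(s+1))$-H\"older in $t$. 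Arzel\`a--Ascoli yields, along any sequence $\lambda_n \to +\infty$, a subsequence $u^{\lambda_{n_k}}$ that converges locally uniformly on $\R^n \times [0,+\infty)$ to some continuous limit $u^*$. By stability of viscosity solutions under locally uniform convergence \cites{i,cmp}, $u^*$ is a viscosity solution of \eqref{levelset}, and by initial convergence $u^*(\cdot, 0) = \bar u_0$. The uniqueness in Theorem \ref{ex} forces $u^* = \bar u$, so the full family satisfies $u^\lambda \to \bar u$ locally uniformly. Combined with the identity above and the continuity of $\bar u$, this gives $\tilde u(\cdot, \tau) \to \bar u(\cdot, 1/(s+1))$ locally uniformly as $\tau\to+\infty$.

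To upgrade to $C^1_{\mathrm{loc}}$ convergence I would invoke Proposition \ref{reg}: for $\tau \geq \tau_0 > 0$, the gradients $D\tilde u(\cdot, \tau)$ are uniformly bounded in $C^\alpha(\R^n)$, hence equicontinuous and uniformly bounded. Combined with the $C^0$ convergence just obtained, applying Arzel\`a--Ascoli along any $\tau_n \to +\infty$ forces every subsequential limit of $D\tilde u(\cdot,\tau_n)$ to coincide with $D\bar u(\cdot, 1/(s+1))$; the full convergence $D\tilde u(\cdot,\tau) \to D\bar u(\cdot, 1/(s+1))$ locally uniformly follows. The last assertion on the rescaled flow \eqref{rescev} is then immediate: by \eqref{rescf} this flow is precisely the subgraph of $\tilde u(\cdot,\tau)$, so it converges in $C^1_{\mathrm{loc}}$ to the subgraph $\tilde E$ of $\bar u(\cdot, 1/(s+1))$, and by Theorem \ref{omogenee} applied at $t = 1/(s+1)$ the constant in \eqref{exp} is $c = 1/(t(s+1)) = 1$.

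The chief technical obstacle is the implication $u_0^\lambda \to \bar u_0 \Rightarrow u^\lambda \to \bar u$: the initial convergence is only locally uniform, since $u_0 - \bar u_0$ may grow sublinearly at infinity, so one must check that the stability theory for nonlocal geometric viscosity solutions in \cites{i,cmp} indeed handles locally uniform (not uniform) convergence of Lipschitz initial data. This is the case because that theory rests on the comparison principle invoked in Theorem \ref{ex}, whose conclusion together with the uniform Lipschitz bound suffices to control the long-range behavior entering the nonlocal operator.
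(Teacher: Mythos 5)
Your proof is correct, but it follows a genuinely different route from the paper's. You run a soft compactness--stability--uniqueness argument on the parabolic rescalings $u^\lambda(x,t)=\lambda^{-1}u(\lambda x,\lambda^{s+1}t)$: the equi-Lipschitz bound from Theorem \ref{ex} and the time H\"older bound from Proposition \ref{holder} give local compactness, the estimate $|u_0^\lambda-\bar u_0|\le K\lambda^{-\delta}(1+|x|)^{1-\delta}$ identifies the initial trace of any subsequential limit, and uniqueness forces the limit to be $\bar u$. The paper instead builds explicit barriers: it cuts off the perturbation $u_0-\bar u_0$ at scale $r$, so that the remaining difference is \emph{globally} bounded by $K(1+2r)^{1-\delta}$ and hence, by plain comparison, contributes an error $K(1+2r)^{1-\delta}e^{-\tau}$ in the rescaled variables; the truncated datum is then sandwiched between the homogeneous cones $\bar u_0\pm 2Kr^{-\delta}|x|$, whose evolutions are the explicit self-similar solutions of Theorem \ref{omogenee} (via \eqref{cono}), with the time shift controlled by Proposition \ref{holder}; choosing $r=e^\tau$ balances the errors since $e^{(1-\delta)\tau}e^{-\tau}\to0$. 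What the paper's route buys is that the soft stability of viscosity solutions is invoked only for the very special one-parameter family $\bar u_0\pm 2Kr^{-\delta}|x|\to\bar u_0$, and the bounded part of the perturbation comes with an explicit decay rate $e^{-\delta\tau}$; what your route buys is brevity and the avoidance of the cutoff construction, at the price of resting the whole argument on the stability theorem for general equi-Lipschitz, locally uniformly convergent data. You correctly flag this as the delicate point, and your justification is the right one: the uniform Lipschitz bound makes the tails of the nonlocal operator in \eqref{hsgraph} uniformly integrable (the integrand is dominated by $2\|G_s\|_\infty|z|^{-n-s}$ away from the origin), so locally uniform convergence suffices to pass to the limit in the viscosity sense --- and indeed the paper itself relies on exactly this kind of stability both here and in Proposition \ref{propchar}. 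The $C^1_{\mathrm{loc}}$ upgrade via Proposition \ref{reg} and the identification of $c=1$ from Theorem \ref{omogenee} at $t=\tfrac1{s+1}$ coincide with the paper's.
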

\begin{proof} 
Let $\psi:(0, +\infty)\to (0, +\infty)$ be a smooth function  such that $\psi(k)\equiv 0$ if $k<1$ and $\psi(k)\equiv 1$ if $k>2$.  Define  for $r>1$
\[u_0^r(x)=\bar u_0(x)+ \psi\left(\frac{|x|}{r}\right)(u_0(x)-\bar  u_0(x)) . \] 
Then our assumption implies that 
\[|u_0(x)- u_0^r(x)| \leq K(1+2r)^{1-\delta}\qquad \forall x\in \R^n. \] 
By the comparison principle we get that, if $\tilde u$ and $\tilde u^r$ are respectively the solutions to \eqref{levelsetresc} with initial datum $u_0$ and $u_0^r$, 
\begin{equation}\label{r1}  \tilde u^r(y,\tau)-K(1+2r)^{1-\delta}e^{-\tau} \leq \tilde u(y, \tau)\leq \tilde u^r(y,\tau)+ K(1+2r)^{1-\delta}e^{-\tau}\qquad \forall y\in \R^n, \tau>0.
\end{equation} 
On the other hand,  
\[\bar u_0(x)- \frac{2K}{ r^\delta}  |x|\leq u_0^r(x)\leq \bar u_0(x)+ \frac{2K}{r^\delta} |x|.\]   
Let $\bar u_{\pm r}$ be  the solutions to \eqref{levelset}  with initial datum respectively $\bar u_0(x)\pm \frac{2K}{ r^\delta}  |x|$. Note that $\bar u_0(x)\pm \frac{2K}{ r^\delta}  |x|$ satisfy \eqref{hom} with Lipschitz constant  $\|D\bar u_0\|_\infty+ \frac{2K}{ r^\delta}\leq \|D\bar u_0\|_\infty+2K$.  By scaling properties of  $\bar u_0(x)\pm \frac{2K}{ r^\delta}  |x|$, see \eqref{cono}, and by formula \eqref{rescf}, we get that  
$\bar u_{\pm r} \left(y, \frac{1-e^{-(s+1)\tau}}{s+1}\right)$ is the solution to \eqref{levelsetresc} with initial datum $\bar u_0(x)\pm \frac{2K}{ r^\delta}  |x|$. 
Then by comparison principle 
\[\bar u_{-r} \left(y, \frac{1-e^{-(s+1)\tau}}{s+1}\right)\leq \tilde u^r(y, \tau) \leq\bar u_{+r} \left(y, \frac{1-e^{-(s+1)\tau}}{s+1}\right). \] 
 By Proposition \ref{holder}, recalling that $\bar u_0(x)\pm \frac{2K}{ r^\delta}  |x|$ are Lipschitz functions with Lipschitz constant less than   $ \|D\bar u_0\|_\infty+2K$, we get that there exists $B$ depending only on $\|D\bar u_0\|_\infty$ and $K$ such that 
\[\bar u_{-r}\left(y, \frac{1}{s+1} \right)-Be^{-\tau} \leq \tilde u^r(y, \tau) \leq \bar u_{+r}\left(y, \frac{1}{s+1} \right)+Be^{-\tau}. \] 
Therefore by \eqref{r1} we conclude that for all $y\in \R^n$, $\tau>0$, and all $r>>1$, 
\[  \bar u_{-r}\left(y, \frac{1}{s+1} \right)-(B+K(1+2r)^{1-\delta})e^{-\tau} \leq \tilde u(y, \tau)\leq \bar u_{+r}\left(y, \frac{1}{s+1} \right)+(B+K(1+2r)^{1-\delta})e^{-\tau}. \] 
Note that as $r\to +\infty$,  $\bar u_{\pm r}\left(y, \frac{1}{s+1} \right)\to  \bar u \left(y, \frac{1}{s+1} \right)$ locally uniformly in $y$ by stability of viscosity solutions, 
since $\bar u_0(x)\pm \frac{2K}{r^\delta} |x|\to \bar u_0(x)$ locally uniformly.
 So taking $r=e^\tau$ in the previous inequality and sending $\tau\to +\infty$, we get the local uniform convergence of $\tilde u$. 
 Finally, since by Proposition \ref{reg}, $\tilde u$ has uniform $C^{1, \alpha}$ norm in $\R^n\times [t_0, +\infty)$, for every $t>0$, we conclude 
 that the locally uniform convergence holds in $C^{1,\alpha}$ sense. 
\end{proof}  

 \begin{remark}\upshape
If condition \eqref{straight}  is violated, then in general  we cannot expect the asymptotic convergence result proved  in Theorem \ref{convthm2}. Indeed,  observe that if $\tilde u$ is the solution to \eqref{levelsetresc} with initial datum $u_0$, then reasoning as in Proposition \ref{holder} we get that
\[|\tilde u(y,\tau)- e^{-\tau} u_0(ye^\tau)|\leq K \left(\frac{1-e^{-(s+1)\tau}}{s+1}\right)^{\frac{1}{s+1}}
\le K \left(\frac{1}{s+1}\right)^{\frac{1}{s+1}}=: K'.\]
In particular, if the convergence takes place, then for every compact set $B\subset\R^n$ there exists $\tau_B>0$ such that 
\[|e^{-\tau_1} u_0(ye^{\tau_1})- e^{-\tau_2} u_0(ye^{\tau_2})|\leq 3 K'\qquad \text{for all $\tau_1,\tau_2\ge \tau_B$.}\]
For instance, this condition is not satisfied by initial data oscillating at infinity between different positively homogeneous functions. 
We refer to  \cite[Proposition 6.1]{eh} for an explicit example. 
\end{remark} 

 \section{Convergence of the unrescaled flow}  \label{unrescaled} 
 
 In this section we will consider some cases in which  convergence of the unrescaled flow holds. To get stability without rescaling, we have to impose some decay or periodicity condition of the initial datum.  
 
  \subsection{Stability of hyperplanes} 
We show that hyperplanes are stable with respect to the flow \eqref{kflow}, that is, if the initial datum is flat at infinity (resp. periodic), then the solution stabilizes  to the hyperplane  at which the initial datum is  (resp. stabilizes to a constant).  

We remark that the behavior of the solution to \eqref{levelset} for these families of initial data is analogous to the behaviour of solutions to the fractional heat equation $u_t+ (-\Delta)^{\frac{s+1}{2}}u=0$, with the same initial data.  Analogous results for the local mean curvature flow of graphs have been obtained in \cite{e,clutt, n}, with different approaches: either  comparison with large balls as in our case (even if in the local case the argument is more involved), or reduction to stabilization of solutions to the heat equation. 

We start with a result about periodic initial data, showing that the solution stabilizes to a constant. For a particular class of periodic initial datum we may show that actually this constant is given by the mean value of the initial datum. 
   \begin{proposition}\label{periodic}Let $u_0:\R^n\to \R$ be a  Lipschitz function which is $\Z^n$ periodic. 
Then, there exists a constant $c\in (\min u_0, \max u_0)$ such that  the solution  $u$ to \eqref{levelset} with initial datum $u_0$ satisfies 
\[\lim_{t\to +\infty} u(x, t) =c\qquad\text{   uniformly in $C^1(\R^n)$. }\] 

If moreover  $u_0$  has  also the property that \begin{equation}\label{seno} 
\text{ there exists $v\in \R^n$ such that  for all  $x\in\R^n$, $u_0(x+v)=-u_0(x)$}\end{equation} then
\[\lim_{t\to +\infty} u(x, t) =0=\int_{[0,1]^n}u_0(x)dx\qquad\text{uniformly in $C^1(\R^n)$. }\]  \end{proposition}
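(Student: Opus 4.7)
My approach has four steps: (a) show periodicity and monotonicity of the extremes, (b) extract an $\omega$-limit solution by compactness, (c) force the $\omega$-limit to be constant via a strict maximum principle, and (d) handle the odd-symmetric case by the $u\mapsto-u$ invariance together with uniqueness. Concretely, by spatial translation invariance of \eqref{levelset} and uniqueness in Theorem \ref{ex}, $u(\cdot,t)$ is $\mathbb{Z}^n$-periodic for every $t\geq 0$. Since constants are stationary solutions (half-spaces have zero fractional mean curvature), comparison forces $M(t):=\max u(\cdot,t)$ to be non-increasing and $m(t):=\min u(\cdot,t)$ non-decreasing, so both admit limits $m_\infty\leq M_\infty$. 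Proposition \ref{reg} provides a uniform $C^{1,\alpha}$ bound for $u(\cdot,t)$, $t\geq 1$, so on the torus $\mathbb{T}^n:=\mathbb{R}^n/\mathbb{Z}^n$ the family $\{u(\cdot,t)\}$ is precompact in $C^{1,\alpha'}$. Along any $t_k\to+\infty$ I extract a subsequence with $u(\cdot,t_k)\to w$ in $C^{1,\alpha'}(\mathbb{T}^n)$; by stability of viscosity solutions and Theorem \ref{ex}, the time-shifted $v_k(x,t):=u(x,t+t_k)$ then converge locally uniformly to the viscosity solution $v$ of \eqref{levelset} with initial datum $w$, and the monotonicity forces $\max v(\cdot,t)\equiv M_\infty$ and $\min v(\cdot,t)\equiv m_\infty$ for all $t\geq 0$.

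\emph{Strict maximum principle.} I next assume for contradiction that $M_\infty>m_\infty$, so $w$ is non-constant. At any $x_0$ maximizing $w$, writing $p_0:=(x_0,M_\infty)$, $E_w$ for the subgraph of $w$ and $H:=\{z\leq M_\infty\}$, the inclusion $E_w\subsetneq H$ together with $H_s(p_0,H)=0$ gives
\[
H_s(p_0,E_w)=2\int_{H\setminus E_w}\frac{dy}{|y-p_0|^{n+1+s}}\geq 2\gamma>0,
\]
where $\gamma$ depends only on $n$, $s$, $\|Du_0\|_\infty$ and $M_\infty-m_\infty$: by the Lipschitz bound the sublevel $\{w\leq(M_\infty+m_\infty)/2\}$ contains a ball of radius proportional to $(M_\infty-m_\infty)/\|Du_0\|_\infty$ in every period cell, and by periodicity such a ball sits at bounded distance from $p_0$. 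The same lower bound then holds at every maximum point of $v(\cdot,t)$, and combined with \eqref{levelset} and the Hölder-in-time regularity of Proposition \ref{holder} it forces $M(t)$ to decrease at a strictly positive rate, contradicting $M\equiv M_\infty$. Hence $M_\infty=m_\infty=:c$, and the $C^1$ convergence $u(\cdot,t)\to c$ follows by interpolating the uniform $C^{1,\alpha}$ bound of Proposition \ref{reg} against $\|u(\cdot,t)-c\|_\infty\to 0$.

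\emph{Odd-symmetric case.} Under the additional assumption $u_0(x+v)=-u_0(x)$, I exploit the invariance of \eqref{levelset} under $u\mapsto-u$: the vertical reflection $(x,z)\mapsto(x,-z)$ maps the subgraph of $-u$ onto the epigraph of $u$, and $H_s(p,E^c)=-H_s(p,E)$, so the sign changes in $u_t$ and in the curvature term cancel. Consequently both $u(x+v,t)$ and $-u(x,t)$ are viscosity solutions of \eqref{levelset} with the same initial datum $-u_0$, so by uniqueness $u(x+v,t)\equiv-u(x,t)$. Letting $t\to+\infty$ yields $c=-c$, i.e.\ $c=0$; and $\int_{[0,1]^n}u_0=\int_{[0,1]^n}u_0(x+v)\,dx=-\int_{[0,1]^n}u_0$ (using periodicity for the first equality) forces $\int u_0=0=c$.

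\emph{Main obstacle.} The delicate point is making the strict maximum principle quantitative when $u_0$ is only Lipschitz, since the solution is then only $C^{1,\alpha}$ in space with $\alpha<s$, so $H_s(v(\cdot,t))$ may fail to be classically defined and $M(t)$ is only Hölder in $t$. I would overcome this by first running the whole argument on $C^{2,\alpha}$ periodic mollifications $u_0^\epsilon$ of $u_0$ — for which Theorem \ref{regufinale} and Lemma \ref{propbound} provide classical $C^\infty$ solutions with pointwise-bounded $u_t$ and hence an ODI for $M^\epsilon(t)$ with strictly negative right-hand side — and then passing to the limit $\epsilon\to 0$ via stability, exploiting that the bound $\gamma$ depends only on $\|Du_0\|_\infty$ and on the oscillation, both of which survive the approximation.
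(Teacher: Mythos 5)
Your proof is correct, and its core mechanism --- a quantitative strict maximum principle at extremum points, with the fractional curvature bounded below by a positive constant depending only on the oscillation, the Lipschitz constant and the period --- is exactly the one the paper uses. The paper simply takes a more direct route around your two auxiliary devices. It does not pass to an $\omega$-limit: it works with $C(t)=M(t)-m(t)$ directly, noting that $M$ and $m$ are monotone (hence differentiable a.e.) and that at a.e.\ $t$ one has $M'(t)=u_t(x_M,t)$; combined with the curvature lower bound (written via the graph formula \eqref{hsgraph1} and the lower bound on the kernel $A$, rather than via your set-difference identity, which is equivalent) this yields $C'(t)\leq -KC(t)$ and hence decay of the oscillation, contradicting $C\geq \bar C$. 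It also does not mollify: at a spatial maximum the second differences in \eqref{hsgraph1} have a sign, so $H_s\in[0,+\infty]$ is well defined there even for merely $C^{1,\alpha}$ solutions with $\alpha<s$, and the viscosity formulation suffices; your mollification workaround is a legitimate (if heavier) alternative, and you are right that it closes only because the decay rate depends solely on $\|Du_0\|_\infty$ and the assumed lower bound on the oscillation, both stable under the approximation. The odd-symmetry step is identical to the paper's.
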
 
 \begin{proof} 
  We observe that by uniqueness the solution $u(x,t)$ is $\Z^n$ periodic in the $x$ variable. 
We define $M(t)=\max_x u(x,t)$ and $m(t)=\min_x u(x,t)$. Note that by Proposition \ref{holder}, $M(t)$ and $m(t)$ are H\"older continuous functions, and moreover by comparison, we have that  $\min_x u_0(x)\leq m(t)\leq M(t)\leq \max_x u_0(x)$, and that $M(t)$ is decreasing and $m(t)$ is increasing. Therefore $M(t)$ and $m(t)$ are differentiable a.e. 
We want to prove that $\lim_{t\to +\infty} M(t)-m(t)=0$. If this holds, then the result follows, recalling that $M(t)$ is decreasing and $m(t)$ is increasing. The $C^1$ convergence is a consequence of the uniform estimates in Proposition \ref{reg}. Assume by contradiction that $\lim_{t\to +\infty} M(t)-m(t)=\bar C>0$.
 We fix $t>0$ and $x_M\in  \text{argmax } u(\cdot,  t)\cap [0,1]^n$ and 
 $x_m\in  \text{argmin } u(\cdot,  t)\cap [0,1]^n$. Let $C(t)=M(t)-m(t)\geq \bar C$. We recall that $u(\cdot, t)$ is Lipschitz continuous with Lipschitz constant less than $\|Du_0\|_\infty$ and 
 we fix $\delta>0$ such that $\delta \|Du_0\|_\infty\leq \frac{\bar C}{2}$. It is immediate to check that  
 \[ u(x_M, t)-u(x,t)\geq u(x_M,t)-u(x_m,t)- \delta \|Du_0\|_\infty=C(t)-\frac{\bar C}{2}\geq \frac{C(t)}{2} \quad \forall x\in B(x_m, \delta),\] 
 and analogously \[u(x, t)-u(x_m,t)\geq \frac{C(t)}{2} \quad \forall x\in B(x_M, \delta).\] 
 Then, at every point of differentiability $t$, the functions  $M, m$ satisfy \[M'(t)= u_t(x_M, t), \ m'(t)=u_t(x_m,t),\qquad\text{ for all  $x_M\in\text{argmax } u(\cdot,  t), x_m\in\text{argmin } u(\cdot,  t)$.   }\]
Using the equation we get 
 \begin{eqnarray*} M'(t)=u_t(x_M, t)&=& -H_s(x_m, u(x_M,t))   \leq \left(1+\|Du_0\|_\infty^2\right)^{-\frac{n+s+1}{2}} \int_{\R^n}  
   \frac{u(y,t)-u(x_M,t)}{|y-x_M|^{n+s+1}}dz\\
   &\leq & - \left(1+\|Du_0\|_\infty^2\right)^{-\frac{n+s+1}{2}}\frac{C(t)}{2} \int_{B(x_m, \delta)} \frac{1}{|y-x_M|^{n+s+1}}dz\\
   &=& - \left(1+\|Du_0\|_\infty^2\right)^{-\frac{n+s+1}{2}}\frac{C(t)}{2}\frac{\omega_n\delta^n }{(\delta+1)^{n+s+1}}<0\end{eqnarray*} 
and 
\begin{eqnarray*} m'(t)=u_t(x_m, t)&=& -H_s(x_m, u(x_m,t))  \geq  \left(1+\|Du_0\|_\infty^2\right)^{-\frac{n+s+1}{2}}  \int_{\R^n}  
   \frac{u(y,t)-u(x_m,t)}{|y-x_m|^{n+s+1}}dz\\
     &\geq&  \left(1+\|Du_0\|_\infty^2\right)^{-\frac{n+s+1}{2}}\frac{C(t)}{2}\frac{\omega_n\delta^n }{(\delta+1)^{n+s+1}}>0.\end{eqnarray*} 
These two inequalities imply that  $M(t)$ is strictly decreasing, $m(t)$ is strictly increasing and $C'(t)\leq -K C(t)$ for a constant $K$ depending only on $\bar C$ and $\|Du_0\|_\infty$. Therefore
$\lim_{t\to +\infty}C(t)=0$, which is in contradiction with our assumption.

 Finally, observe that if $u_0 $ satisfies \eqref{seno}, then by uniqueness, $-u(x,t)=u(x+v, t)$. This implies that necessarily $\lim_{t\to +\infty} u(x,t)=-\lim_{t\to +\infty} u(x,t)$ and then the limit is $0$. 
 \end{proof} 
 
We first of all prove  stability of  constant functions in $\R^n$. 
\begin{theorem} \label{convstat}  Let  $u_0:\R^n\to \R$ be a  Lipschitz function such that 
\[\lim_{|x|\to +\infty} u_0(x ) =0.\]
Then,  the solution  $u$ to \eqref{levelset} with initial datum $u_0$ satisfies 
\[\lim_{t\to +\infty} u(x, t) =0\qquad \text{ uniformly in $C^1(\R^n)$. }\] 
\end{theorem}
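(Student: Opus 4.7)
The plan is to reduce the problem, through Lipschitz barriers and translation invariance, to showing uniform decay for compactly supported nonnegative initial data, then sandwich by a periodic upper barrier to exploit Proposition~\ref{periodic}, and finally bound the resulting periodic limit via comparison with a union of a half-space and large balls. Given $\eta>0$, the assumption $u_0\to 0$ at infinity provides $R>0$ with $|u_0(x)|\leq\eta$ for $|x|\geq R$. The Lipschitz barriers $u_0^+:=\max(u_0,\eta)$ and $u_0^-:=\min(u_0,-\eta)$ satisfy $u_0^-\leq u_0\leq u_0^+$ and coincide with $\pm\eta$ outside $B_R$. By Theorem~\ref{ex} the corresponding solutions bracket $u$, and because \eqref{levelset} is invariant under vertical translations, the shifted solutions $u^+-\eta$ and $-(u^-+\eta)$ start from the compactly supported nonnegative Lipschitz data $v_0^+:=u_0^+-\eta$ and $v_0^-:=-u_0^--\eta$. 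Letting $\eta\to 0$ at the end, the theorem reduces to: for every Lipschitz $v_0\geq 0$ with $\mathrm{supp}\,v_0\subset B_R$, the solution $v$ of \eqref{levelset} satisfies $v(\cdot,t)\to 0$ uniformly. To establish this, fix $L>2R$ and let $w_0^L$ be the $L\Z^n$-periodic Lipschitz function that equals $v_0$ on $[-L/2,L/2]^n$; then $w_0^L\geq v_0$, and Proposition~\ref{periodic} yields a constant $c(L)\in[0,\sup v_0]$ with $w^L(\cdot,t)\to c(L)$ uniformly. Comparison then gives $\limsup_{t\to+\infty}\sup_{x\in\R^n}v(x,t)\leq c(L)$.

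The critical step is to prove $c(L)\to 0$ as $L\to+\infty$, and this is where comparison with large balls enters. Pick $\rho\geq\sqrt{R^2+(\sup v_0)^2}$ so that each bump of $w_0^L$ is contained in the upper half of the ball $B_\rho((Lk,0))\subset\R^{n+1}$. Then the subgraph of $w_0^L$ is contained in
\[
F_0^L:=\{z\leq 0\}\cup\bigcup_{k\in\Z^n}B_\rho((Lk,0)),
\]
and by the set-comparison principle for the level-set flow this inclusion is preserved under the fractional mean curvature flow. The periodic spherical caps of $F_0^L$ have fractional mean curvature of order $\rho^{-s}$ and disappear in time $O(\rho^{s+1})$, while the velocity on the flat part of $\partial F_0^L$, far from all caps, is controlled by the tail of the kernel: the contribution of the caps to $H_s$ at such a point is $O(\rho^{n+1}/L^{n+s+1})$, obtained by summing the bound $\rho^{n+1}/|Lk|^{n+s+1}$ over $k\in\Z^n$. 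Integrating this velocity over the lifetime of the caps, the flat part of $F_t^L$ has risen by a periodic function of oscillation $O(\rho^{n+s+2}/L^{n+s+1})$ by the time all caps have vanished; applying Proposition~\ref{periodic} a second time to the residual $L\Z^n$-periodic graph bounds its asymptotic constant, and hence $c(L)$, by a quantity that vanishes as $L\to+\infty$.

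Combining these steps and letting first $L\to+\infty$ and then $\eta\to 0$ gives $u(\cdot,t)\to 0$ in $L^\infty(\R^n)$. By Proposition~\ref{reg}, $\|u(\cdot,t)\|_{C^{1+\alpha}(\R^n)}$ is uniformly bounded for $t\geq 1$; standard Hölder interpolation between the just-proved $L^\infty$ convergence and this uniform $C^{1+\alpha}$ bound upgrades convergence to $C^1$, completing the proof. The genuinely delicate step is the bound $c(L)\to 0$: Proposition~\ref{periodic} gives convergence of the periodic flow to \emph{some} constant in the range of the initial datum without identifying it, so one must actually run the coupled evolution of the half-space and the periodic family of caps and quantify how much the flat part has been perturbed by the time the caps extinguish. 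Handling the near-cap regions of the boundary of $F_t^L$, where the velocity is large and the geometry is not graphical, is the main technical obstacle.
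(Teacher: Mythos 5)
Your overall strategy (reduce to nonnegative data, dominate by a periodic barrier, invoke Proposition~\ref{periodic}) is the same as the paper's, but the proof collapses exactly at the step you yourself flag as "the main technical obstacle": showing that the limit constant $c(L)$ of the periodized flow tends to $0$. Proposition~\ref{periodic} only locates the limit constant between the min and the max of the periodic initial datum, so for your periodization $w_0^L$ it gives $c(L)\in(0,\max v_0)$, which is useless; everything therefore hinges on the half-space-plus-balls computation, and that computation is not correct as stated. First, the set $F_0^L$ is not itself a super- or subsolution, so you must control the \emph{actual} evolution of a non-graphical set whose caps interact with the half-space and with each other; in particular the caps do not shrink like free balls (the half-space and the other caps only \emph{decrease} the fractional curvature of each cap, so even the extinction time $O(\rho^{s+1})$ needs a separate clearing-out argument). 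Second, the tail estimate $O(\rho^{n+1}/L^{n+s+1})$ for the rise of the flat part holds only at points whose distance to the nearest cap is of order $L$; at distance $O(\rho)$ from a cap the velocity is $O(\rho^{-s})$, so over the caps' lifetime the residual set can still protrude by $O(\rho)$ near each lattice point. The residual periodic graph thus has oscillation $O(\rho)$, not $O(\rho^{n+s+2}/L^{n+s+1})$, and a second application of Proposition~\ref{periodic} again only yields a constant somewhere in $(0,O(\rho))$ — you are back where you started, and iterating does not obviously improve matters.

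The paper avoids this entirely by a different barrier: arguing by contradiction with $\bar M=\lim_t\max_x u(\cdot,t)>0$, it first uses comparison with exterior balls (estimate \eqref{ball}) to show $u(\cdot,\bar t)<\bar M/2$ outside a large ball, and then builds a periodic Lipschitz barrier $v_0\geq u(\cdot,\bar t)$ that satisfies, up to a vertical translation, the antisymmetry condition \eqref{seno}. For such data Proposition~\ref{periodic} \emph{identifies} the limit constant as the midpoint $\tfrac34\bar M+\tfrac\eps2<\bar M$, giving the contradiction. If you want to complete your argument along your own lines, you would need either a rigorous extinction-plus-perturbation analysis for the set $F_t^L$ (a substantial piece of work not present in your sketch) or, more simply, to replace the plain periodization $w_0^L$ by a barrier with the antisymmetry property so that the constant in Proposition~\ref{periodic} is pinned down. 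Your reduction steps and the final $C^1$ upgrade via the uniform $C^{1+\alpha}$ bounds of Proposition~\ref{reg} are fine.
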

\begin{proof}
First of all we observe that  it is sufficient to prove  the result for initial data which are nonnegative everywhere or nonpositive everywhere. Indeed the general case is easily obtained by using as barriers  the solutions with initial data $u_0^+=\max (u_0, 0)$ and $u_0^-=\min(u_0, 0)$.  

So, we prove the result only for the case  $u_0\geq 0$, since the other case $u_0(x)\leq	 0$ is completely analogous. 
Note that by comparison, since the constant are stationary solutions to \eqref{levelset}, $0\leq u(x,t)\leq \max_{y} u_0(y)$ for all $x\in \R^n, t>0$. 

We claim now that for  all $t>0$, \[\text{$\inf_x u(x,t)=0$  and that $M(t):=\max_x u(x,t)$   is decreasing in $t$. }\]
Indeed for every $\eps>0$, let us fix $R>0$ such that $|u_0(x)|\leq \eps$ for all $|x|\geq R$. For every $|x|>R$,  fix $K=|x|-R>0$ and observe that the ball $B((x, K+\eps), K)$ of center $(x, K+\eps)$ and radius $K$, is contained in $\R^{n+1}\setminus E_0$. By monotoniciy of the flow \eqref{kflow} with respect to inclusions, see \cite{cmp}, there holds that \[B((x,K+\eps), K(t))\subseteq \R^{n+1}\setminus E_t,\] where $K(t)=(K^{s+1}-(s+1)\bar c t)^{\frac{1}{s+1}}$, and $\bar c$ is the fractional mean curvature of the unit ball in $R^{n+1}$. Therefore, we get that  for all $t$ with $t\leq T\leq \frac{K^{s+1}}{2(s+1)\bar c}$ there holds 
\[u(x,t)\leq \eps + K-K(t)\leq \eps-K'(T) t=\eps+\frac{\bar c}{ (K^{s+1}-(s+1)\bar c T)^{\frac{s}{s+1}}} t\leq \eps+ \frac{\bar c 2^{\frac{s}{s+1}}}{K^s}   t. \]

This implies that  for all $\eps>0$, there exists $R>0$ such that for $|x|>R$, \begin{equation}\label{ball}  0\leq u(x,t)\leq \eps+\frac{\bar c 2^{\frac{s}{s+1}}}{(|x|-R)^s}   t\qquad \text{for all $t\leq \frac{(|x|-R)^{s+1}}{2(s+1)\bar c}$.}\end{equation}
This implies  that $\inf_x u(x,t)=0$ for all $t>0$ and moreover that $\sup_x u(x,t)=\max_x u(x,t)$. The fact that $\max_x u(x,t)$ is decreasing in $t$ is a consequence of comparison with stationary solutions.  
 
Now, since $M(t)$ is decreasing, let $\bar M=\lim_{t\to +\infty} M(t)=\inf_t M(t)$. We claim that $\bar M=0$. If the claim holds, then we get the conclusion.   The $C^1$ convergence is a consequence of the uniform estimates in Proposition \ref{reg}. 
 
Assume by contradiction that $\bar M>0$. We fix $0<\eps<\frac{\bar M}{2}$ and $\bar t>0$ such that $M(\bar t)\leq \bar M+\eps$. We fix also $R=R(\bar t)$ such that $u(x, \bar t)<\frac{\bar M}{2}$ for all  $|x|>R$.   

Now we aim to get a contradiction by constructing a  periodic barrier which satisfies (up to suitable vertical  translation) a condition like \eqref{seno}. 
We fix a Lipschitz continuous function $\phi:[-R, 2R] \to\R$, such that  $\phi$ is non increasing, $\phi(z)=\bar M+\eps$ for $z\in[-R, R]$, and $\phi(2R)=\frac{\bar M}{2}$. Now we extend it to a function $\phi:[-R, 5R]\to \R$ by putting $\phi(z)=\frac{3}{2}\bar M+\eps-\phi(z-3R)$   for all $2R\leq z\leq 5R$. Finally, we extend it by periodicity to be a $6R\Z$ periodic function. 
Then the function $v_0(x)=\phi(x\cdot e_1)$, is a $6R\Z^n$ periodic function, which is Lipschitz continuous, and satisfies  $v_0(x+3Re_1)=\frac{3}{2}\bar M+\eps-v_0(x)$.

Note that by construction, $u(x, \bar t)\leq v_0(x)$ for all $x\in \R^n$ and then by comparison \[u(x,t+\bar t)\leq v(x,t),\qquad \text{and in particular }  \limsup_{t\to +\infty} u(x,t)\leq \lim_{t\to +\infty} v(x,t)\] where $v(x,t)$ is the solution to \eqref{levelset} with initial datum $v_0$. Now by Proposition \ref{periodic} we get that $\lim_{t\to +\infty}v(x,t)=c$  uniformly in $C^1$, and moreover, since $v_0(x+3Re_1)=\frac{3}{2}\bar M+\eps-v_0(x)$ 
 there holds that $c=\frac{3}{2}\bar M+\eps-c$, and so $c=\frac{3}{4}\bar M+\frac{\eps}{2}<\bar M$, recalling our choice of $\eps$. 
But then we get that  $\limsup_{t\to +\infty} u(x,t)<\bar M$, in contradiction  with the definition of $\bar M$. 
 \end{proof} 
\begin{remark}\label{uniform}Let $u_0^\lambda$ be a family of Lipschitz continuous functions which fulfills uniformly in $\lambda$ the condition in Theorem \ref{convstat}, in the sense that
\[\sup_\lambda\sup_{|x|>R} |u_0^\lambda(x)|\to 0\qquad \text{ as $R\to +\infty$}.\]
Then it is easy to check that the convergence is uniform in $\lambda$ in the sense that  
\[\sup_\lambda \|u^\lambda(x,t)\|_{C^{1}}\to 0\qquad \text{ as $t\to +\infty$}\]
where $u^\lambda$ is the solution to \eqref{levelset} starting from $u_0^\lambda$. 
\end{remark} 
Finally we give the general result about stability of hyperplanes. We denote with $d(A,B)$  the Hausdorff distance between the sets $A,B$. 
\begin{corollary}\label{stabilitafinale} Let $E_0\subseteq \R^{n+1}$ be a set such that  $\partial E_0$ is a Lipschitz surface and that there exists a half-space $H$ for which
\[\lim_{R\to +\infty} d(E_0\setminus B(0, R), H\setminus B(0, R))=0. \]Then the outer and inner level set flows $E^+(t), E^-(t)$ defined in \eqref{outin} satisfy  
\[\lim_{t\to +\infty}d(E^+(t), H)=0= \lim_{t\to +\infty}d(E^-(t),  H ). \]\end{corollary}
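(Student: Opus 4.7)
The plan is, up to a rotation setting $H=\{x_{n+1}\le 0\}$, to sandwich $E_0$ between the subgraphs of two Lipschitz functions asymptotic to the constants $\pm\eps$ at infinity, propagate this sandwich through the level set flow by comparison, and then appeal to Theorem \ref{convstat} together with the vertical translation invariance of \eqref{levelset} on each barrier. Letting $\eps\to 0$ at the end delivers both conclusions.

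Concretely, fix $\eps>0$ and pick $R>0$ with $d(E_0\setminus B(0,R),H\setminus B(0,R))\le\eps$. Using the Lipschitz regularity of $\partial E_0$ together with this asymptotic flatness, I expect that outside a possibly larger ball $B(0,R')$ the surface $\partial E_0$ is the graph of a Lipschitz function $f:\R^n\to\R$ with $f(x)\to 0$ as $|x|\to\infty$, and $E_0$ coincides with its subgraph there. I then build Lipschitz $u_0^\pm:\R^n\to\R$ equal to $\pm\eps$ for $|x|\ge R''$ (for some $R''>R'$), equal to large enough positive/negative constants on $B(0,R')$ to dominate/be-dominated-by the vertical extent of $E_0$ there, and Lipschitz-interpolated in between, so that
\[
\{(x,z):z\le u_0^-(x)\}\subseteq E_0\subseteq\{(x,z):z\le u_0^+(x)\}.
\]
By the comparison principle for the geometric level set flow (Theorem \ref{ex} and \cites{cmp,i}), this sandwich persists in time, giving
\[
\{z\le u^-(x,t)\}\subseteq E^-(t)\subseteq E^+(t)\subseteq\{z\le u^+(x,t)\},
\]
where $u^\pm(\cdot,t)$ solve \eqref{levelset} with initial data $u_0^\pm$.

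To finish, observe that constants are stationary solutions of \eqref{levelset} (since $H_s$ vanishes on a half-space and the equation depends only on the subgraph), so $u^\pm(x,t)=\pm\eps+v^\pm(x,t)$ where $v^\pm$ solves \eqref{levelset} with Lipschitz initial data $u_0^\pm\mp\eps$ tending to $0$ at infinity. Theorem \ref{convstat} yields $v^\pm(\cdot,t)\to 0$ uniformly (indeed in $C^1$), hence $u^\pm(\cdot,t)\to\pm\eps$ uniformly. The sandwich then gives $E^\pm(t)\subseteq\{z\le 2\eps\}$ and $\{z\le-2\eps\}\subseteq E^\pm(t)$ for $t\ge t_\eps$, which translates to $d(E^\pm(t),H)\le 2\eps$ for $t\ge t_\eps$. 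Since $\eps>0$ was arbitrary, both $d(E^\pm(t),H)\to 0$ as $t\to+\infty$.

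The main technical point, where I expect to spend most effort, is establishing the \emph{lower} barrier $u_0^-$. The Hausdorff hypothesis directly forces $E_0\subseteq\{z\le\eps\}\cup B(0,R)$, which makes $u_0^+$ immediate, but the containment $\{z\le-\eps\}\setminus B(0,R')\subseteq E_0$ needed on the deep side is not a tautology from Hausdorff closeness alone. It requires using the Lipschitz regularity of $\partial E_0$ to rule out overhangs at infinity and to identify $\partial E_0$, outside a sufficiently large ball, as a Lipschitz graph over $\partial H$ of which $E_0$ is the subgraph; once this is done, constructing $u_0^-$ from the Lipschitz graph $f$ and a Lipschitz cutoff is routine.
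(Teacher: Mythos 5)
Your proposal is correct and follows essentially the same route as the paper: reduce to $H=\{z\le 0\}$ by rotation invariance, sandwich $E_0$ between subgraphs of Lipschitz functions decaying to $0$ (or to $\pm\eps$) at infinity, propagate the inclusions by comparison, and conclude via Theorem \ref{convstat}. The paper constructs the two barriers directly with limit $0$ at infinity rather than running the extra $\eps$-shift-and-limit step, but this is only a cosmetic difference.
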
 \begin{proof} Since the fractional mean curvature is invariant by rotations and translations,  we may assume without loss of generality that $H=\{(x, z)\in \R^{n}\times\R\ |\ z\leq 0\}$. Moreover, by the assumption that $\lim_{R\to +\infty} d(E_0\setminus B(0, R), H\setminus B(0, R))=0$, there exist two Lipschitz functions $u_0, v_0:\R^n\to\R$ such that $\lim_{|x|\to +\infty} u_0(x)=0=\lim_{|x|\to +\infty} v_0(x)$ and  $\{(x, z)\in \R^{n}\times\R\ |\ z\leq u_0(x)\}\subseteq E_0\subseteq \{(x, z)\in \R^{n}\times\R\ |\ z\leq v_0(x)\}$. By  comparison we get that 
$\{(x, z)\in \R^{n}\times\R\ |\ z\leq u(x,t)\}\subseteq E_t^-\subseteq E_t^+\subseteq \{(x, z)\in \R^{n}\times\R\ |\ z\leq v(x,t)\}$, where $u(x,t), v(x,t)$ are the solutions to \eqref{levelset} with initial datum $u_0, v_0$. 
By  Theorem \ref{convstat} $\lim_{t\to+\infty} u(x,t)=\lim_{t\to+\infty} v(x,t)=0$ uniformly in $\R^n$, and this gives the conclusion. 
\end{proof} 
\subsection{Stability of convex cones}
 In this section we provide   the convergence of the unrescaled flow in the case the initial data is decaying  at infinity to a  $H_s$-mean convex cone, staying above it. The result can be strenghtened if the initial cone is convex, by using the stability of hyperplanes. 
 \begin{proposition} \label{sconvex} Let $u_0:\R^n\to \R$ be a Lipschitz continuous function. Assume there exists  a non linear function $\bar u_0$ which satisfies \eqref{hom},  and  \begin{equation}\label{convex}
 \text{  $H_s(x, \bar u_0(x))\leq 0$ in the viscosity sense,}\end{equation}  
 such that  
 \[u_0(x)\geq \bar u_0(x)\qquad\text{and}\qquad  \lim_{|x|\to +\infty} u_0(x)-\bar u_0(x)=0. \] 
Then, if $u$ is  the solution to \eqref{levelset} with initial datum $u_0$, it holds
\[\lim_{t\to +\infty} u(x, t)-\bar u(x,t)=0\qquad\text{locally uniformly in $C^1(\R^n)$. }\] 
\end{proposition}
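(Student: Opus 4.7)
My plan is to show, via the comparison principle and translation invariance of \eqref{kflow}, that a suitably modified $\sup_x(u-\bar u)$ tends to $0$ as $t\to\infty$; combined with the uniform $C^{1,\alpha}$ bounds of Proposition \ref{reg}, this pointwise decay upgrades to the claimed local uniform $C^1$ convergence of $u-\bar u$ to zero (via interpolation, or Ascoli-Arzelà applied to $u(\cdot,t)-\bar u(\cdot,t)$ along sequences of times). The two basic reductions are: (i) since $H_s(x,\bar u_0(x))\leq 0$ in the viscosity sense makes $\bar u_0$ a stationary subsolution, Theorem \ref{ex} gives $u\geq \bar u\geq \bar u_0$ and $u\leq \bar u+M$ with $M:=\|u_0-\bar u_0\|_\infty$; (ii) $M(t):=\sup_x(u-\bar u)(\cdot,t)$ is nonincreasing, because at each $t_0$, $\bar u(\cdot,t)+M(t_0)$ is a (translated) solution dominating $u(\cdot,t_0)$ and hence $u(\cdot,t)$ for all $t\geq t_0$.

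To exploit the decay of $u_0-\bar u_0$ at infinity I would reduce to a compactly supported perturbation. Given $\varepsilon>0$, let $R_\varepsilon$ be such that $u_0-\bar u_0\leq\varepsilon$ on $B_{R_\varepsilon}^c$ and set
\[
V_0(x):=\bar u_0(x)+\varepsilon+M\,\eta(|x|/R_\varepsilon),
\]
with $\eta$ a smooth cutoff equal to $1$ on $[0,1]$ and $0$ on $[2,\infty)$. Then $V_0\geq u_0$ and $V_0=\bar u_0+\varepsilon$ outside $B_{2R_\varepsilon}$, so by comparison $u\leq V$, and by translation invariance $W:=V-\varepsilon$ solves \eqref{levelset} starting from $W_0:=\bar u_0+M\eta(|x|/R_\varepsilon)$, a nonnegative, compactly supported Lipschitz perturbation of $\bar u_0$ bounded by $M$. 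Since $u-\bar u\leq W-\bar u+\varepsilon$, it suffices to prove $W-\bar u\to 0$ locally uniformly in $C^1$ for the compactly supported case (and then let $\varepsilon\to 0$, using $u-\bar u\geq 0$).

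For this reduced problem the strategy is to make the monotonicity of $M_W(t):=\sup_x(W-\bar u)(\cdot,t)$ strict and quantitative. At a maximum point $x_M(t)$, $D_x(W-\bar u)(x_M,t)=0$; using the graphical representation \eqref{hsgraph1} of $H_s$ together with the inclusion $E^W\subseteq E^{\bar u}+M_W(t)e_{n+1}$ (strict on $\{W-\bar u<M_W(t)\}$) one obtains
\[
M_W'(t)\leq -c\int_{\R^n}\bigl[M_W(t)-(W-\bar u)(x')\bigr]_+\,K_s(x_M(t),x')\,dx'<0,
\]
for a positive kernel $K_s$ arising from the $|p-y|^{-(n+1+s)}$ integrand. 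The main obstacle is complementing this with a tail bound ensuring that the integrand above has substantial support: concretely, that $\sup_{|x|\geq R}(W-\bar u)(x,t)\to 0$ as $R\to\infty$ for each $t$, which initially holds because $W_0-\bar u_0$ has compact support. Since the nonlocal $H_s$ has no finite propagation speed, this tail bound cannot be obtained by cone comparison as in the local mean-curvature case \cite{clutt}; instead I would try to prove it by comparison on annular regions $\{|x|\geq R\}$ with suitably translated copies of $\bar u$, combined with the Lipschitz stability of Theorem \ref{ex} and the fact that $W_0=\bar u_0$ outside $B_{2R_\varepsilon}$. With such a tail bound in hand the displayed inequality becomes an ODE-type decay $M_W'\leq -\phi(M_W)$ with $\phi>0$ on $(0,\infty)$, forcing $M_W(t)\to 0$ and, via the sandwich $0\leq u-\bar u\leq M_W(t)+\varepsilon$ followed by $\varepsilon\to 0$, the claimed convergence.
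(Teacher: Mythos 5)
There is a genuine gap here, and in fact two of the key ingredients of your argument are left unproven. First, the ``tail bound'' that you yourself identify as the main obstacle --- that $\sup_{|x|\ge R}(W-\bar u)(x,t)\to 0$ uniformly enough in $t$ --- is not a removable technicality: proving that a compactly supported bump on top of a stationary subsolution dissipates is essentially the content of the hyperplane--stability result (Theorem \ref{convstat}), which in the paper requires a separate and rather delicate construction of periodic barriers; it cannot be obtained by soft comparison on annuli. Second, even granting the tail bound, your differential inequality $M_W'\le -\phi(M_W)$ with $\phi>0$ independent of $t$ cannot hold: the perturbation spreads over the self-similar spatial scale $t^{1/(s+1)}$, so the set where $W-\bar u$ is close to $M_W(t)$ may have diameter growing in $t$, and the kernel mass $\int_{|x'-x_M|\gtrsim t^{1/(s+1)}}|x'-x_M|^{-(n+1+s)}dx'$ available to drive the decay degenerates like $t^{-1}$. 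At best you would get $M_W'\le -c(t)\phi(M_W)$ with $c(t)\to 0$, and you would then have to verify $\int c(t)\,dt=+\infty$; none of this is addressed. There are also the usual issues of whether the supremum is attained and whether the Hamilton-type differentiation of $M_W$ is licit in the viscosity framework.

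A telling symptom is that you never use the hypothesis that $\bar u_0$ is \emph{non-linear}, which is the engine of the paper's much shorter proof. Non-linearity together with $1$-homogeneity forces $\bar u_0$ to be singular at the origin, whence $\bar u(0,1)>\bar u_0(0)=0$; by the scaling identity \eqref{cono} and Lipschitz continuity, $\bar u(x,T)\ge T^{1/(s+1)}\bar u(0,1)-C|x|$, so for $T$ large $\bar u(\cdot,T)\ge u_0$ on the ball $B_R$ where $u_0-\bar u_0$ may exceed $\eps$, while $u_0\le \bar u_0+\eps\le \bar u(\cdot,T)+\eps$ outside $B_R$. Comparison then gives the sandwich $\bar u(x,t)\le u(x,t)\le \bar u(x,t+T)+\eps$ (the lower bound being free from $u_0\ge\bar u_0$ and the subsolution property), and the conclusion follows from the last assertion of Theorem \ref{omogenee}, namely $\bar u(\cdot,t+T)-\bar u(\cdot,t)\to 0$ locally uniformly. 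This ``absorb the time delay into the expander'' mechanism completely bypasses any quantitative decay of the maximum, which is where your argument gets stuck.
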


\begin{proof} 
Observe that by \eqref{convex},  $\bar u_0(x)$ is a stationary viscosity subsolution to \eqref{levelset}, therefore $\bar u(x, t)\geq \bar u_0(x)$ and so in particular $ \bar u(0, t)\geq \bar u_0(0)$. Observe that, if $\bar u_0$ is a homogeneous Lipschitz function, then either it is linear or it is singular at $0$, in the sense that the curvature in a neighborhood of $x=0$ is not bounded.  Therefore, since we assumed that $\bar u_0$ is non linear, then $\bar u(0,t)>0$  
since $\bar u$ is smooth by Theorem \ref{omogenee}.  
Again by comparison, we get that also $\bar u(x, t+r)\geq \bar u(x,t) $ for all $t\geq 0$, $r>0$, $x\in\R^n$.   

Fix $\eps>0$ and $R>0$ such that $u_0(x)\leq \bar u_0(x)+\eps$ for all $|x|>R$. Therefore  we get that
for all $T>0$,  \begin{equation}\label{due}u_0(x)\leq \bar u(x,T)+\eps\qquad \text{ for all $|x|>R$}.\end{equation} 
Observe now that  by \eqref{cono} and Lipschitz continuity \[\bar u(x,t)\geq \bar u(0, t)- C|x|= t^{\frac{1}{s+1}}\bar u(0,1)-C|x|.\] 
Since $\bar u(0,1)>0$ 
 there exists $T>0$ sufficiently large such that \begin{equation}\label{uno}\bar u(x,T)\geq T^{\frac{1}{s+1}}\bar u(0,1)-C|x|\geq u_0(x)\qquad\text{ for all $|x|\leq R$. }\end{equation}

Therefore, by \eqref{due}, \eqref{uno}, and by comparison we get that for some $T>0$ sufficiently large
\[u(x,t)\leq \bar u(x, T+t)+\eps\qquad \forall t\geq 0, x\in \R^n.\]

Note that since $u_0\geq \bar u$,  by comparison $u(x,t)\geq \bar u(x,t)$ for all $x,t$.
Then we get, for $\eps>0$ and $T>0$ fixed (and depending on $\eps$), 
\[0\leq u(x,t)- \bar u(x,t)\leq  \bar u(x, T+t)-\bar u(x,t)+\eps.\] 
We conclude by letting $t\to +\infty$ and recalling  that, by Theorem \ref{omogenee}, $\bar u(x, T+t)-\bar u(x,t)\to 0$ as $t\to +\infty$  uniformly in $x$, for all fixed $T$. 
\end{proof}

\begin{theorem}\label{convexthm}  Let $u_0:\R^n\to \R$ be a Lipschitz continuous function. Assume there exists a convex function $\bar u_0$ which satisfies \eqref{hom} and  such that  
\[ \lim_{|x|\to +\infty} u_0(x)-\bar u_0(x)=0. \] 
  Then, if $u$ the solution to \eqref{levelset} with initial datum $u_0$, 
\[\lim_{t\to +\infty} u(x, t)-\bar u(x,t)=0\qquad\text{  uniformly in $C^1(\R^n)$. }\] 
\end{theorem}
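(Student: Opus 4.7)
The strategy is to upgrade Proposition \ref{sconvex} to a two-sided result with uniform (rather than merely locally uniform) convergence, exploiting two consequences of the convexity assumption. First, if $\bar u_0$ is convex then $\bar u_0(x+z)+\bar u_0(x-z)-2\bar u_0(x)\geq 0$, so by formula \eqref{hsgraph1} the viscosity inequality $H_s(x,\bar u_0(x))\leq 0$ holds, i.e.\ condition \eqref{convex} is satisfied. Second, convexity is preserved by the flow and yields the curvature bound \eqref{claim1}, which by Remark \ref{convexcono} strengthens the limit $\bar u(\cdot,t+T)-\bar u(\cdot,t)\to 0$ from locally uniform to uniform in $x$ for every fixed $T>0$. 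We may assume $\bar u_0$ is non-linear: otherwise $\bar u\equiv \bar u_0$ is a stationary hyperplane and the claim reduces to Theorem \ref{convstat} applied to $u_0-\bar u_0$.

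For the upper bound we mimic Proposition \ref{sconvex}. Given $\eps>0$, fix $R>0$ such that $u_0\leq \bar u_0+\eps$ on $\{|x|>R\}$. By \eqref{cono}, $\bar u(0,T)=T^{1/(s+1)}\bar u(0,1)\to \infty$ since $\bar u(0,1)>0$ for non-linear $\bar u_0$; combined with the Lipschitz bound on $u_0$, one can choose $T$ so large that $\bar u(x,T)\geq u_0(x)$ on $B_R$. Together with $\bar u(\cdot,T)+\eps\geq \bar u_0+\eps\geq u_0$ on $\{|x|>R\}$ (using $\bar u\geq \bar u_0$ from the subsolution property), this gives $u_0\leq \bar u(\cdot,T)+\eps$ on $\R^n$. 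Comparison then yields $u(\cdot,t)\leq \bar u(\cdot,T+t)+\eps$, and the uniform convergence from Remark \ref{convexcono} gives $\limsup_{t\to \infty}\sup_{\R^n}(u-\bar u)\leq \eps$. Sending $\eps\to 0$ completes this half.

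For the lower bound, choose $R>0$ with $u_0\geq \bar u_0-\eps$ on $\{|x|>R\}$ and construct a Lipschitz barrier $\psi_0\leq u_0$ of the form $\psi_0(x)=\bar u_0(x)-\eps-M\eta(|x|/R)$, where $\eta:[0,\infty)\to [0,1]$ is a smooth cutoff equal to $1$ on $[0,1]$ and vanishing on $[2,\infty)$, and $M$ is chosen (using the Lipschitz bounds on $u_0$ and $\bar u_0$ to control the defect on $B_{2R}$) so that $\psi_0\leq u_0$ everywhere. By construction, $\psi_0$ coincides with $\bar u_0-\eps$ outside $B_{2R}$ and satisfies $\psi_0\leq \bar u_0-\eps$ pointwise, so the flow $\psi$ from $\psi_0$ satisfies $\psi\leq u$ and $\psi\leq \bar u-\eps$ by comparison. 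It thus suffices to prove that $(\bar u(\cdot,t)-\eps)-\psi(\cdot,t)\to 0$ uniformly as $t\to \infty$; combined with $u\geq \psi$, this yields $u\geq \bar u-\eps+o(1)$ uniformly, and letting $\eps\to 0$ concludes.

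The hard part is this last step: the uniform dissipation of the bounded, compactly-supported initial defect $(\bar u_0-\eps)-\psi_0$ against the smooth evolving background $\bar u-\eps$. The plan is to argue as in the proof of Theorem \ref{convstat}. Ball-barrier estimates in the spirit of \eqref{ball} bound the defect pointwise outside any sufficiently large ball, relying on the regularity and curvature control of $\bar u$ guaranteed by convexity via \eqref{claim1}, while a periodic-barrier argument analogous to Proposition \ref{periodic}, applied to the defect relative to the evolving background $\bar u-\eps$, forces its supremum to vanish as $t\to \infty$. The final $C^1$ convergence then follows from the uniform $C^{1,\alpha}$ estimates of Proposition \ref{reg} combined with Arzel\`a--Ascoli.
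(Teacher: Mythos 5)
Your upper bound is exactly the paper's Step~1 (convexity gives \eqref{convex}, hence $\bar u\ge\bar u_0$, then the two-region comparison $u_0\le\bar u(\cdot,T)+\eps$ and Remark \ref{convexcono} for uniformity), and your use of \eqref{claim1} to upgrade locally uniform to uniform convergence of $\bar u(\cdot,t+T)-\bar u(\cdot,t)$ matches the paper's Step~3. The gap is in the lower bound. You reduce it to showing that the defect $(\bar u(\cdot,t)-\eps)-\psi(\cdot,t)$ of a compactly supported initial perturbation dissipates uniformly, and you propose to run the ball-barrier and periodic-barrier arguments of Theorem \ref{convstat} and Proposition \ref{periodic} ``relative to the evolving background $\bar u-\eps$.'' This is precisely the step that does not go through as described: those arguments are not statements about differences of two solutions but about a single solution compared with a \emph{stationary, bounded} background. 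The periodic barrier in Theorem \ref{convstat} is a bounded $6R\Z^n$-periodic function placed \emph{above} $u(\cdot,\bar t)$, which is only possible because $u$ is bounded (close to a constant); a solution close to a nonlinear cone is unbounded and cannot be dominated by any bounded periodic function. Likewise, Proposition \ref{periodic} exploits periodicity of the solution itself to guarantee that $\max$ and $\min$ are attained and to evaluate the equation there; the defect $(\bar u-\eps)-\psi$ solves only a linearized nonlocal equation whose coefficients (built from $G_s'$ evaluated along the two non-periodic solutions) are not periodic and whose supremum need not be attained. The moving background $\bar u(\cdot,t)$ also grows like $t^{1/(s+1)}$ at the origin, so there is no stationary object to anchor the comparison. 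None of this is a routine adaptation; it is the heart of the proof and is left unproved.

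The idea you are missing is the paper's Step~2: since $\bar u_0$ is convex and $1$-homogeneous, it is the supremum of its supporting linear functions, i.e.\ for every $\nu\in\mathbb S^n$ there is $p_\nu$ with $\bar u_0(x)\ge p_\nu\cdot x$ and equality along the ray $\R_+\nu$. Setting $u_0^\nu=\min(u_0,p_\nu\cdot x)$, the hypothesis gives $u_0^\nu-p_\nu\cdot x\to0$ at infinity uniformly in $\nu$, so the stability of hyperplanes (Corollary \ref{stabilitafinale}, with the uniformity of Remark \ref{uniform}) yields $u^\nu(\cdot,t)-p_\nu\cdot x\to0$ uniformly in $\nu$; since $u\ge u^\nu$ by comparison, taking the supremum over $\nu$ gives $\liminf_{t\to\infty}u(x,t)\ge\bar u_0(x)-\delta$ for $t\ge T(\delta)$, uniformly in $x$. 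Comparison from time $T(\delta)$ then gives $u(\cdot,t+T(\delta))\ge\bar u(\cdot,t)-\delta$, which combined with your upper bound and Remark \ref{convexcono} closes the argument. This reduction to the flat case is exactly where convexity is used beyond \eqref{convex}, and it replaces the barrier construction you sketch. (A minor separate point: in the degenerate case where $\bar u_0$ is linear you should rotate coordinates and invoke Corollary \ref{stabilitafinale} rather than ``apply Theorem \ref{convstat} to $u_0-\bar u_0$,'' since subtracting a linear function does not commute with the graphical fractional curvature operator.)
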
 
\begin{proof}
We divide the proof in several steps.

\noindent
{\bf Step 1: for every $\eps>0$ there exists $T=T(\eps)$ such that  \begin{equation}\label{tre}u(x,t)\leq \bar u(x, T(\eps)+t)+\eps\qquad \forall t\geq 0, x\in \R^n.\end{equation}  
}

Since $\bar u_0$ is convex, then it also satisfies \eqref{convex}.
We  proceed  as in Proposition \ref{sconvex}. So for every $\eps>0$, there exists $R=R(\eps)>0$ such that $u_0(x)\leq \bar u_0(x)+\eps\leq \bar u(x,t)+\eps$ for all $|x|>R$ and moreover, arguing as  in the proof of \eqref{uno}, we get that   there exists $T>0$ sufficiently large such that \[\bar u(x,T)\geq   u_0(x)\qquad\text{ for all $|x|\leq R$. }\] 
Therefore, since $u_0(x)\leq \bar u(x, T)+\eps$, we conclude by comparison.

 \noindent {\bf Step 2:  for every $\delta>0$ there exist  $T(\delta)>0$ such that 
 \begin{equation}\label{loca} u(x,  t)\geq \bar u_0(x)-\delta\qquad \forall t\geq T(\delta).\end{equation}}
Since $\bar u_0$ is convex and is positively $1$-homogeneous, we get that  for all $\nu \in  \mathbb{S}^{n}$  there exists $p_\nu\in \R^n$,  such that $\bar u_0(x)\geq  p_\nu\cdot x$, with equality at every $x=\lambda\nu$ with $\lambda\geq 0$.  

We define the family of functions $u_0^\nu(x)=\inf (u_0(x), p_\nu\cdot x)$  and observe that by the assumption there holds that
\[\lim_{R\to +\infty} \sup_{\nu\in\mathbb{S}^n} \sup_{|x|>R} u_0^\nu(x)-p_\nu\cdot x=0. \] So, by Corollary \ref{stabilitafinale}, and arguing as in Remark \ref{uniform}, we get that 
  \[ \lim_{t\to +\infty}\sup_{\nu\in \mathbb{S}^{n}} d(E^\nu_t, H^\nu)=0\] where $H_\nu$ is the halfspace with normal $(-p_\nu, 1)$. This in particular implies that $\lim_{t\to +\infty} u^\nu(x,t)- p_\nu\cdot x=0$ uniformly in $\nu$, which in turns gives that $\liminf_{t\to +\infty} u(x,t)- p_\nu\cdot x\geq \lim_{t\to +\infty} u^\nu(x,t)- p_\nu\cdot x=0 $ uniformly in $\nu$, and so in particular $\liminf_{t\to +\infty} u(x,t)\geq \bar u_0(x)$. This permits to conclude.
  
  \noindent {\bf Step 3: conclusion.}
  
  Observe that by  Step 2, and comparison principle, for $\delta>0$ fixed,  there holds that  $u(x, t+T(\delta))\geq \bar u(x,t)-\delta$. 
  So, for every $\eps>0$ and $t\geq T(\delta)$, we get by Step 1 and the previous observation that 
  \[\bar u(x, t-T(\delta))-\bar u(x,t)-\delta \leq u(x,t)-\bar u(x,t)\leq \bar u(x, t+T(\eps))-\bar u(x,t)+\eps.\]
  Now we conclude by arbitrariness of $\eps,\delta$ and  by Remark \ref{convexcono}, letting $t\to +\infty$. 

\end{proof}

\section{Ancient  and homothetically shrinking solutions} \label{sectionsh} 
Finally we consider homothetically shrinking solutions in the graphical case, and we show that they are necessarily hyperplanes. 

 \begin{definition} \label{ancient} An  ancient solution  to \eqref{kflow}  is a solution to \eqref{kflow} defined for all $t\in (-\infty, 0)$.
  \end{definition} 
We recall the following Liouville theorem for ancient solutions of parabolic fractional equations with rough kernels, proved in  \cite[Theorem 3.1]{serra}. We state it in the setting we are going to apply it. 
\begin{theorem}\label{liouville}\cite[Theorem 3.1]{serra} Let $I$ be a translation invariant operator, elliptic with fractional order $1+s$, with $I(0)=0$ and $u\in C(\R^n\times (-\infty, 0])$ be a viscosity solution to 
$u_t-I(u)=0$  in $\R^n\times (-\infty, 0]$. Assume  there exists $C>0$ such that  for all $R\geq 1$ there holds
\[\sup_{|x|\leq R, -R^{1+s}\leq t\leq 0} |u(x,t)|\leq C R.\] Then there exists $a\in \R^n, b\in \R$ such that $u(x,t)=a\cdot x+b$.
\end{theorem}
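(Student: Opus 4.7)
The plan is to exploit the ellipticity of order $1+s$ and the translation invariance of $I$ through a blow-down/rescaling argument. Since the natural parabolic cylinders for an operator of order $1+s$ are $Q_R := B_R \times [-R^{1+s}, 0]$, and $I$ commutes with the anisotropic rescaling $u \mapsto u_R(x,t) := R^{-1} u(Rx, R^{1+s}t)$ (translation invariance plus the correct scaling of the equation $u_t - I(u) = 0$), the linear growth hypothesis becomes a \emph{uniform} bound $\|u_R\|_{L^\infty(Q_1)} \leq C$ for all $R \geq 1$.

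Next, I would invoke the interior $C^{1,\alpha}$ regularity theory for viscosity solutions of translation invariant, uniformly elliptic integro-differential parabolic equations (developed by Caffarelli--Silvestre and refined for rough kernels by Serra, exactly in the form needed here). Applied to $u_R$ on $Q_1$, this yields
\[
\|\nabla u_R\|_{L^\infty(Q_{1/2})} + [\nabla u_R]_{C^\alpha(Q_{1/2})} \leq C,
\]
with $C$ independent of $R$. Unscaling gives the global gradient bound $\|\nabla u\|_{L^\infty(\R^n \times (-\infty, 0])} \leq C$.

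Now, by translation invariance, each spatial incremental quotient $v_h(x,t) := h^{-1}(u(x+he_i,t)-u(x,t))$ is a viscosity solution of a \emph{linear} nonlocal parabolic equation whose kernel inherits the ellipticity of $I$ (this is the standard linearization trick for translation invariant operators, taking supremum and infimum of the linearizations). Passing to the limit $h\to 0$, or working directly with $v_h$, the interior H\"older estimate at scale $R$ reads
\[
[\partial_i u]_{C^\alpha(Q_{R/2})} \leq C R^{-\alpha} \|\partial_i u\|_{L^\infty(Q_R)} \leq C' R^{-\alpha},
\]
where the last inequality uses the uniform gradient bound. Sending $R \to \infty$ forces $\partial_i u$ to be constant for each $i$, so $u(x,t) = a\cdot x + f(t)$ for some $a \in \R^n$ and $f : (-\infty,0] \to \R$.

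Finally, plugging this ansatz into the equation and using that $I$ is translation invariant with $I(0)=0$ and has symmetric increments (so affine functions are annihilated, just as in \eqref{hsgraph1} where $A(x,z,u) = A(x,-z,u)$ renders linear functions harmonic), we get $f'(t) = I(a\cdot x + f(t)) = 0$, hence $f \equiv b$ and $u(x,t) = a \cdot x + b$. The main obstacle is the first step: the interior $C^{1,\alpha}$ regularity for translation invariant, merely measurable elliptic kernels of order $1+s$ — this is a nontrivial parabolic analogue of Krylov--Safonov/Evans--Krylov, adapted to the fractional scaling, and is precisely the content of the regularity results in \cite{serra} that must be taken as the key black box.
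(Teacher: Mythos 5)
The paper does not prove this statement at all: it is imported verbatim from \cite[Theorem 3.1]{serra} and used as a black box, so there is no in-paper argument to compare yours against. What you have written is a sketch of a proof of the cited result itself, and its architecture --- parabolic cylinders $Q_R=B_R\times[-R^{1+s},0]$ adapted to the order $1+s$, a blow-down $u_R(x,t)=R^{-1}u(Rx,R^{1+s}t)$ turning linear growth into a uniform bound, interior $C^{1,\alpha}$ estimates to get a global gradient bound, incremental quotients solving linear equations in the same ellipticity class, and iteration of the H\"older estimate across all scales to kill $[\partial_i u]_{C^\alpha}$ --- is indeed the standard and correct strategy for Liouville theorems of this type.

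Two points, however, need repair. First, your key displayed estimates are not correct statements of the available interior regularity for \emph{nonlocal} equations: both the $C^{1,\alpha}$ estimate on $Q_{1/2}$ and the H\"older estimate $[\partial_i u]_{C^\alpha(Q_{R/2})}\leq CR^{-\alpha}\|\partial_i u\|_{L^\infty(Q_R)}$ must carry a tail term, i.e.\ a weighted global norm of the solution over all of $\R^n$ (and over the full time slab), because the operator sees the whole space. This is precisely where the hypotheses do real work and cannot be elided: the growth assumption, applied at radius $\max(|y|,R)$, gives $|u_R(y,t)|\leq C(1+|y|)$ for all $y\in\R^n$ and $t\in[-1,0]$, and this linear tail is integrable against kernels of order $1+s$ exactly because $1<1+s$; the same global bound $\|\nabla u\|_{L^\infty(\R^n\times(-\infty,0])}\leq C$ from your second step controls the tails of the increments in the third step. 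Without saying this, the argument as written would ``prove'' the same Liouville theorem for operators of order $\sigma\leq 1$, where it is false for linear growth. Second, translation invariance does not make $I$ commute with the anisotropic rescaling; what one actually uses is that $u_R$ solves $\partial_t u_R=I_R(u_R)$ for some $I_R$ in the same scale-invariant ellipticity class (and that the regularity estimates are uniform over that class), and likewise that every operator in the class annihilates affine functions, which is what justifies $I(a\cdot x+f(t))=I(0)=0$ in your final step (to be read in the viscosity sense, since $f$ need not be differentiable a priori). With these two points made explicit, the sketch is a faithful account of how the cited theorem is proved.
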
 
\begin{theorem}\label{nocontr}
 The only graphical Lipschitz solutions to \eqref{contr} are hyperplanes (with $c=0$). 
 
Moreover the only graphical uniformly Lipschitz ancient solutions to \eqref{kflow} are hyperplanes.
 \end{theorem}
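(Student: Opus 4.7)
The strategy is to prove the second (more general) assertion first via the Liouville Theorem~\ref{liouville}, and then derive the first as a consequence, by converting any graphical Lipschitz shrinker into a uniformly Lipschitz ancient solution through the self-similar scaling. Let $u \in C(\R^n \times (-\infty, 0])$ be a graphical uniformly Lipschitz ancient solution with $L := \sup_{t \leq 0} \|Du(\cdot, t)\|_\infty < \infty$. Rewriting \eqref{levelset} in the form $u_t - I(u) = 0$ with
\[
I(u)(x) := -\sqrt{1+|Du(x)|^2}\,H_s\big((x,u(x)),\{(x',z) : z \leq u(x')\}\big),
\]
the representation \eqref{hsgraph1} together with the pointwise bounds on the kernel $A(x,z,u)$ following \eqref{prova} show that $I$ is translation invariant, uniformly elliptic of fractional order $1+s$ with ellipticity constants depending only on $L$, and satisfies $I(0)=0$ since a horizontal hyperplane has vanishing fractional curvature. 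The structural hypotheses of Theorem~\ref{liouville} are therefore met.

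The remaining input is the linear growth estimate $\sup_{|x| \leq R,\,-R^{1+s} \leq t \leq 0} |u(x,t)| \leq C R$ for $R \geq 1$. Spatial Lipschitz regularity at each time immediately gives $|u(x,t) - u(0,t)| \leq L R$. For the time variable I apply Proposition~\ref{holder} with $u(\cdot, t)$ as initial datum at the (shifted) initial time $t \leq 0$: since the constant $K$ in that proposition depends only on the spatial Lipschitz bound (which is at most $L$), one obtains $|u(0,0) - u(0,t)| \leq K|t|^{1/(s+1)} \leq K R$ on $t \in [-R^{1+s}, 0]$. Combining gives $|u(x,t)| \leq (L + K) R + |u(0,0)|$, which is dominated by $CR$ for all $R \geq 1$. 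Theorem~\ref{liouville} then forces $u(x,t) = a \cdot x + b$, so the subgraph is the half-space below a hyperplane.

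For the first assertion, given a graphical Lipschitz $u_1$ solving \eqref{contr} with parameter $c \geq 0$, define $\lambda(t) := [1 - c(s+1)(t-1)]^{1/(s+1)}$ and $u(x,t) := \lambda(t)\,u_1(x/\lambda(t))$; as in Proposition~\ref{propchar}, $u$ solves \eqref{levelset} for all $t \leq 1$, and a direct chain-rule computation yields $Du(x,t) = Du_1(x/\lambda(t))$, so $u$ is uniformly Lipschitz in $x$ with the same constant as $u_1$. After the harmless time shift $t \mapsto t-1$, $u$ is a uniformly Lipschitz ancient solution in the sense of the second assertion, hence affine; consequently $u_1$ is itself affine, its subgraph is a hyperplane, and $H_s \equiv 0$ on $\partial E_1$, so \eqref{contr} reduces to $c\,(p \cdot \nu) \equiv 0$ on the hyperplane, forcing $c = 0$ in the non-degenerate case. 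The main technical hurdle is verifying the ellipticity and translation-invariance hypotheses of Theorem~\ref{liouville} uniformly on $(-\infty,0]$; this reduces to the coefficient bounds immediately after \eqref{prova}, which depend only on $L$, so no new estimate beyond the parabolic time-regularity of Proposition~\ref{holder} is required.
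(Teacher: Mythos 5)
Your proof is correct and follows essentially the same route as the paper: both reduce everything to Serra's Liouville theorem (Theorem~\ref{liouville}) by turning the shrinker into a self-similar ancient solution and obtaining the linear growth bound from the spatial Lipschitz estimate plus the time-H\"older estimate of Proposition~\ref{holder}. The only difference is organizational — you prove the ancient-solution statement first and deduce the shrinker statement from it, while the paper verifies the growth bound for the self-similar ancient solution directly from the explicit form of $\lambda(t)$ — and your remark that $c=0$ is forced only when the resulting hyperplane does not pass through the origin is a fair (and slightly more careful) reading of the degenerate case.
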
 
 
\begin{proof} The result is a consequence of Theorem \ref{liouville}. 

Let $E$ be a graphical Lipschitz solution to \eqref{contr}, that is let $u_1:\R^n\to \R$ be a Lipschitz continuous function such that $E=\{(x,z)\ | z\leq u_1(x)\}$ is a solution
 to \eqref{contr}. Then arguing as in  Proposition \ref{propchar} we may connstruct a solution to \eqref{levelset} in $(-\infty, 0)$ with $u_1(x,t)=u_1(x)$. Indeed  let
\[  E_t:= \left[-c(s+1)t+1\right]^{\frac{1}{s+1}} E=\lambda(t)E\qquad \text{ for }t<0.\]   It is easy to check, using the fact that $E$ is a solution to \eqref{contr}, that  $E_t$ is a solution to \eqref{kflow}. Therefore the function 
\begin{equation}\label{defu1} u_1(x,t):=\lambda(t)u_1\left( \frac{x}{\lambda (t)}\right)=\left[-c(s+1)t+1\right]^{\frac{1}{s+1}} u_1\left(x \left[-c(s+1)t+1\right]^{-\frac{1}{s+1}}\right) \end{equation} is a solution to \eqref{levelset} in $(-\infty, 0)$ and satisfies $u_1(x,0)=u_1(x)$. 
Since $u_1$ is Lipschitz continuous we get that
\[|u_1(x,t)|\leq \lambda(t)|u_1(0)|+\|Du_1\|_\infty |x|.\]
This implies that there exists $K>0$ depending on $c, s, \|Du_1\|_\infty, u_1(0)$, such that  for all $R>1$, \[\max_{|x|\leq R, t\in [-R^{s+1}, 0]}  |u_1(x,t)| \leq K R.\] 
Recalling the formula for $H_s$ \eqref{hsgraph1}, we get  that $u_1$ is a viscosity solution to 
\[u_t-I (u)=0, \qquad t\in (-\infty, 0), \] 
where $I$ is  a translation invariant operator, elliptic  with fractional order $s+1$, and $I(0)=0$. 
 Then, by Theorem \ref{liouville} 
we conclude that there exist $a\in \R^n$ and $b\in \R$ such that $u_1(x,t)= a\cdot x +b$ for all  $t\leq 0$ and $x\in\R^n$. 
This implies that $E$ is a hyperplane and $c=0$. 

Finally, if $E_t$ is a graphical uniformly Lipschitz  and ancient solution to \eqref{kflow}, then $u(x,t)$ is a continuous viscosity solution to $u_t-I (u)=0 $ for $t\in (-\infty, 0)$ and moreover, since $|Du(x,t)|\leq C$, arguing as in Proposition \ref{holder} we obtain that there exists a constant $K$ only depending on $C$ such that $|u(x,t)-u(x,t+h)|\leq K|h|^{\frac{1}{1+s}}$. So, again by Theorem \ref{liouville} we conclude that $u$ is affine and does not depend on $t$. 
\end{proof} 

 \begin{remark}\label{eternal}
 In the case of classical mean curvature flow, see for instance \cite{hof} and references therein, 
 there are translating, hence eternal, solutions which are smooth but not Lipschitz.
 We expect that such solutions, with polynomial growth depending on $s$, exist also for the graphical fractional mean curvature flow \eqref{levelset}.
 \end{remark} 
  
 \begin{bibdiv}
\begin{biblist}

\bib{av}{article}{
    AUTHOR = {Abatangelo, Nicola},
    author={ Valdinoci, Enrico},
     TITLE = {A notion of nonlocal curvature},
   JOURNAL = {Numer. Funct. Anal. Optim.},
    VOLUME = {35},
      YEAR = {2014},
    NUMBER = {7-9},
     PAGES = {793--815},
}


\bib{bfv}{article}{
author={ Barrios, Bego\~{n}a},
author={ Figalli, Alessio}, 
author ={Valdinoci, Enrico},
title={ Bootstrap regularity for integro-differential operators and its application to nonlocal minimal surfaces.},
journal={Ann. Sc. Norm. Super. Pisa Cl. Sci. (5)},
volume={ 13},
pages={609--639},
year={2014},
} 

\bib{MR2675483}{article}{
   author={Caffarelli, Luis},
   author={Roquejoffre, Jean-Michel},
   author={Savin, Ovidiu},
   title={Nonlocal minimal surfaces},
   journal={Comm. Pure Appl. Math.},
   volume={63},
   date={2010},
   number={9},
   pages={1111--1144},
   issn={0010-3640},
}

\bib{cs}{article}{
   author={Caffarelli, Luis},
   author={Souganidis, Panagiotis E.},
   title={Convergence of nonlocal threshold dynamics approximations to front
   propagation},
   journal={Arch. Ration. Mech. Anal.},
   volume={195},
   date={2010},
   number={1},
   pages={1--23},
   issn={0003-9527},
}

\bib{cdnv}{article}{
   author={Cesaroni, Annalisa},
   author={Dipierro, Serena},
   author={Novaga, Matteo},
   author={Valdinoci, Enrico},
   title={Fattening and nonfattening phenomena for planar nonlocal curvature flows},
  JOURNAL = {Math. Ann.},
    VOLUME = {375},
      YEAR = {2019},
    NUMBER = {1-2},
     PAGES = {687--736},
}

\bib{cn}{article}{
    AUTHOR = {Cesaroni, Annalisa},
    author={Novaga, Matteo},
     TITLE = {Symmetric self-shrinkers for the fractional mean curvature
              flow},
   JOURNAL = {J. Geom. Anal.},
    VOLUME = {30},
      YEAR = {2020},
    NUMBER = {4},
     PAGES = {3698--3715}, 
     }

\bib{ckn}{article}{
    AUTHOR = {Cesaroni, Annalisa},
    author={Kr\"oner, Heiko}, 
    author={Novaga, Matteo},
     TITLE = { Anisotropic mean curvature flow of Lipschitz graphs and convergence to self-similar solutions}, 
        JOURNAL = {ESAIM Control Optim. Calc. Var.},
    VOLUME = {27},
      YEAR = {2021},
         PAGES = {17 pp}, 
     }
     
\bib{cmp}{article}{
    AUTHOR = {Chambolle, Antonin},
    author={Morini, Massimiliano}, 
    author={Ponsiglione, Marcello},
     TITLE = {Nonlocal curvature flows},
   JOURNAL = {Arch. Ration. Mech. Anal.},
    VOLUME = {218},
      YEAR = {2015},
    NUMBER = {3},
     PAGES = {1263--1329},
}

\bib{cnr}{article}{
   author={Chambolle, Antonin},
   author={Novaga, Matteo},
   author={Ruffini, Berardo},
   title={Some results on anisotropic fractional mean curvature flows},
   journal={Interfaces Free Bound.},
   volume={19},
   date={2017},
   number={3},
   pages={393--415},
   issn={1463-9963},
}

\bib{csv1}{article}{
   author={Cinti, Eleonora},
   author={Sinestrari, Carlo},
   author={Valdinoci, Enrico},
   title={Neckpinch singularities in fractional mean curvature flows},
   journal={Proc. Amer. Math. Soc.},
   volume={146},
   date={2018},
   number={6},
   pages={2637--2646},
   issn={0002-9939},
}

\bib{csv2}{article}{
     author={Cinti, Eleonora},
   author={Sinestrari, Carlo},
   author={Valdinoci, Enrico},
        TITLE = {Convex sets evolving by volume-preserving fractional mean
              curvature flows},
   JOURNAL = {Anal. PDE},
    VOLUME = {13},
      YEAR = {2020},
    NUMBER = {7},
     PAGES = {2149--2171},
   }
		
\bib{clutt}{article}{ 
    AUTHOR = {Clutterbuck, Julie}, 
    author={Schn\"{u}rer, Oliver C.},
     TITLE = {Stability of mean convex cones under mean curvature flow},
   JOURNAL = {Math. Z.},
    VOLUME = {267},
      YEAR = {2011},
    NUMBER = {3-4},
     PAGES = {535--547},
}
 
 \bib{eh}{article}{
author = {Ecker, Klaus},
    author = {Huisken, Gerhard},
     TITLE = {Mean curvature evolution of entire graphs},
   JOURNAL = {Ann. of Math. (2)},
       VOLUME = {130},
      YEAR = {1989},
    NUMBER = {3},
     PAGES = {453--471},
}

 \bib{e}{book}{ 
    AUTHOR = {Ecker, Klaus},
     TITLE = {Regularity theory for mean curvature flow},
    SERIES = {Progress in Nonlinear Differential Equations and their
              Applications},
    VOLUME = {57},
 PUBLISHER = {Birkh\"{a}user Boston, Inc., Boston, MA},
      YEAR = {2004},
     PAGES = {xiv+165},
}


\bib{hof}{article}{   
 AUTHOR = {Hoffman, David}, 
AUTHOR = {Ilmanen, Tom},
 AUTHOR = {Mart\'{\i}n, Francisco}, 
 AUTHOR = {White, Brian},
     TITLE = {{G}raphical translators for mean curvature flow},
   JOURNAL = {Calc. Var. Partial Differential Equations},
     VOLUME = {58},
      YEAR = {2019},
    NUMBER = {4},
     PAGES = {Paper No. 158},
      ISSN = {0944-2669},
}

\bib{hu}{article}{
   author={Huisken, Gerhard},
   title={Flow by mean curvature of convex surfaces into spheres},
   journal={J. Differential Geom.},
   volume={20},
   date={1984},
   number={1},
   pages={237--266},
   issn={0022-040X},
}
%
%
\bib{i}{article}{
   author={Imbert, Cyril},
   title={Level set approach for fractional mean curvature flows},
   journal={Interfaces Free Bound.},
   volume={11},
   date={2009},
   number={1},
   pages={153--176},
   issn={1463-9963},
}


 \bib{lamanna}{article}{
 AUTHOR = {Julin, Vesa}, 
 author ={  La Manna, Domenico Angelo},
     TITLE = {Short time existence of the classical solution to the
              fractional mean curvature flow},
   JOURNAL = {Ann. Inst. H. Poincar\'{e} Anal. Non Lin\'{e}aire},
    VOLUME = {37},
      YEAR = {2020},
    NUMBER = {4},
     PAGES = {983--1016},
}

\bib{n}{article}{
    AUTHOR = {Nara, Mitsunori},
    author= {Taniguchi, Masaharu},
     TITLE = {The condition on the stability of stationary lines in a
              curvature flow in the whole plane},
   JOURNAL = {J. Differential Equations},
      VOLUME = {237},
      YEAR = {2007},
    NUMBER = {1},
     PAGES = {61--76},
}

\bib{SAEZ}{article}{
   author = {{S{\'a}ez}, Mariel},
   author = {Valdinoci, Enrico},
    title = {On the evolution by fractional mean curvature},
  journal = {Comm. Anal. Geom.},
   volume={27},
   date = {2019},
    number={1},
    pages={211--249},
}

 \bib{serra}{article}{
  AUTHOR = {Serra, Joaquim},
     TITLE = {Regularity for fully nonlinear nonlocal parabolic equations
              with rough kernels},
   JOURNAL = {Calc. Var. Partial Differential Equations},
      VOLUME = {54},
      YEAR = {2015},
    NUMBER = {1},
     PAGES = {615--629},
}
\bib{silv}{article}{
    AUTHOR = {Schwab, Russell W.},
    author ={Silvestre, Luis},
     TITLE = {Regularity for parabolic integro-differential equations with
              very irregular kernels},
   JOURNAL = {Anal. PDE},
    VOLUME = {9},
      YEAR = {2016},
    NUMBER = {3},
     PAGES = {727--772},
}		
\bib{w}{article}{
    AUTHOR = {Wang, Lu},
     TITLE = {A {B}ernstein type theorem for self-similar shrinkers},
   JOURNAL = {Geom. Dedicata},
    VOLUME = {151},
      YEAR = {2011},
     PAGES = {297--303},
}

\end{biblist}\end{bibdiv}
  \end{document}